\definecolor{royalazure}{rgb}{0.0, 0.22, 0.66}
\numberwithin{equation}{section}
\newtheorem{theorem}{\bf Theorem}[section]
\newtheorem{lemma}[theorem]{\bf Lemma}
\newtheorem{proposition}[theorem]{\bf Proposition}
\newtheorem{definition}{\bf Definition}[section]
\newtheorem{corollary}[theorem]{Corollary}
\theoremstyle{remark}
\newtheorem{example}{\bf Example}[section]
\DeclareMathOperator{\op}{op}
\newcommand{\A}{{\mathbb A}}
\newcommand{\B}{{\mathbb B}}
\newcommand{\N}{{\mathbb N}}
\newcommand{\R}{{\mathbb R}}
\newcommand{\Z}{{\mathbb Z}}
\newcommand{\sK}{{\mathscr K}}
\newcommand{\sL}{{\mathscr L}}
\newcommand{\sS}{{\mathscr S}}
\newcommand{\sU}{{\mathscr U}}
\newcommand{\ga}{\alpha}
\newcommand{\gb}{\beta}
\newcommand{\gG}{\Gamma}
\newcommand{\gve}{\varepsilon}
\newcommand{\gl}{\lambda}
\newcommand{\gL}{\Lambda}
\newcommand{\go}{\omega}
\newcommand{\gvp}{\varphi}
\newcommand{\gt}{\theta}
\newcommand{\gs}{\sigma}
\def\skp#1{\langle#1\rangle}
\DeclareMathOperator{\crit}{crit}
\DeclareMathOperator{\graph}{graph}
\DeclareMathOperator{\ind}{ind}
\DeclareMathOperator{\Tr}{Tr}
\DeclareMathOperator{\tr}{tr}
\numberwithin{equation}{section}
\begin{document}
\begin{center}
{\LARGE Operator Algebras  Associated with \\Quantized Canonical Transformations} \vspace{3mm} \\
{\large Anton Savin} \vspace{2mm} \\
{\large Peoples' Friendship University of Russia } \vspace{3mm} \\
and \vspace{3mm} \\
{\large Elmar Schrohe} \vspace{2mm}\\
{\large Leibniz Universit\"at Hannover} 
\end{center}

\medskip
\medskip
\medskip

\begin{minipage}[c]{150truemm}
{\bf Abstract.}\; 
We review an approach to the index theory of operator algebras associated with  Lie groups of quantized canonical transformations. Main points are an ellipticity condition ensuring the Fredholm property, the definition of localized algebraic and analytic indices and the proof of their equality. 
This framework encompasses many well-known index problems, such as the classical theory on closed manifold, the Atiyah-Weinstein  problem and the index theory for operators with shifts. 
\end{minipage}

%
%



\medskip
\medskip

\noindent\textbf{Key words}: \;\;\, 
Quantized canonical transformation, index theory, algebraic index, semiclassical analysis  \\
\textbf{MSC(2010)}: \; 
58J40, 58J20, 81Q20

\medskip
\medskip
\medskip

\section{Introduction}
Given a closed manifold $M$ and a discrete group $G$ of quantized canonical 
transformations $\Phi_g$, $g\in G$,  we are developing an index theory for the algebra of operators of the form 
$$D = \sum_g D_g\Phi_g+K$$ 
on $L^2(M)$. Here, the $D_g$ are zero order pseudodifferential operators on $M$, only finitely many different from zero, and $K$ is in the ideal $\sK(L^2(M))$ of compact operators. 

We present suitable notions of symbols and ellipticity and show the Fredholm property of elliptic elements. 
As a first step towards an index formula, we then focus on the case where $G$ is a finite extension of $\Z^d$, $d\in \N_0$. 
We introduce  the localized algebraic index of the complete symbol of an elliptic operator.
With the help of a calculus of semiclassical quantized canonical transformations, a version of Egorov's theorem and a theorem on trace asymptotics for 
semiclassical Fourier integral operators we show that the localized analytic index and the localized algebraic index coincide.
As a corollary, we express the Fredholm index in terms of  the algebraic index.

{\bf Acknowledgment.} The support through grants DFG SCHR 319/8-1  and RFBR 19-01-00574 is gratefully acknowledged.

\section{Quantized Canonical Transformations}
Let $M$ be a closed manifold of dimension $n$. Its cotangent bundle $T^*M$ carries a natural symplectic structure, 
given in local coordinates by the form 
$$\go = \sum_{j=1}^n dx^j \wedge d\xi^j.$$
We let 
$T_0^*M= T^*M\setminus 0$
be the cotangent bundle with the zero section removed. 
Recall that a symplectomorphism  is a diffeomorphism 
$$C: T_0^*M\rightarrow T_0^*M$$ 
preserving the symplectic form: $\go(C(x,\xi),C(y,\eta)) = \go((x,\xi), (y,\eta))$.   

\begin{definition}\rm
A {\em canonical transformation} is a symplectomorphism which in addition is positively homogeneous of degree $1$ in the fiber: 
$C(y,\eta)=(x,\xi)$ implies that $C(y,\gl\eta)=(x,\gl\xi)$ for all $\gl>0$.    
\end{definition} 

\begin{example}\label{ex.1}
{\rm (a)} 
Let $\ga: M\to M$ be a diffeomorphism of $M$. Then a canonical transformation 
$C_\ga$ is defined by 
$$C_\ga(y,\eta) = (\ga^{-1}(y), {}^t(\partial \ga(y))^{-1}\eta).$$
Symplectomorphisms of this type extend to the full cotangent bundle. 

{\rm (b)} Let $H=H(x,\xi)$ be a smooth function on $T^*_0M$ which is positively homogeneous of degree $1$ in the fiber. 
We denote by $t\mapsto F_t$, $t\in \R$,  the flow on $T^*_0M$ generated by the Hamiltonian vector field  $V_H$  induced by $H\ ($recall that $V_H$ is defined by the relation $\iota_{V_H}\go = dH)$.

Then the map $(y,\eta) \mapsto F_t(y,\eta)$ defines a canonical transformation  for every $t\in \R$.   
\end{example}

In this note, which is based on the articles \cite{SS1} and \cite{SSS2}, we will study {\em quantized canonical transformations}, i.e., bounded operators on $L^2(M)$ associated with a canonical transformation in a sense we shall explain next. We start with the following simple fact:  

\begin{lemma}
The $($twisted$)$ graph of a canonical relation defines a Lagrangian submanifold of $T^*_0M\times T^*_0M$. More precisely: 
The set  
$$\Lambda = \{((x,\xi),(y,-\eta))\in T^*_0M\times T^*_0M:  (x,\xi) = C(y,\eta)\}$$
is a Lagrangian submanifold of $T^*_0M\times T^*_0M$, endowed with the symplectic form $\go\oplus \go$. 
\end{lemma}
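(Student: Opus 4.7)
The plan is to exhibit $\Lambda$ as the image of a smooth embedding of $T_0^*M$ and show that the pullback of $\omega\oplus\omega$ under this embedding vanishes; a dimension count will then upgrade ``isotropic'' to ``Lagrangian''.

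First, I would parametrize $\Lambda$ by the map
\[
\phi:T_0^*M\longrightarrow T_0^*M\times T_0^*M,\qquad \phi(y,\eta)=\bigl(C(y,\eta),(y,-\eta)\bigr).
\]
Since $C$ is a diffeomorphism of $T_0^*M$, the map $\phi$ is a smooth embedding, and $\Lambda=\phi(T_0^*M)$ is a submanifold of dimension $2n$, i.e.\ exactly half the dimension of the ambient symplectic manifold $T_0^*M\times T_0^*M$.

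Next I would check that $\phi^\ast(\omega\oplus\omega)=0$. Writing $\pi_1,\pi_2$ for the two projections of $T_0^*M\times T_0^*M$, one has $\pi_1\circ\phi=C$ and $\pi_2\circ\phi=\sigma$, where $\sigma(y,\eta)=(y,-\eta)$. Therefore
\[
\phi^\ast(\omega\oplus\omega)=C^\ast\omega+\sigma^\ast\omega.
\]
By hypothesis $C$ is a symplectomorphism, so $C^\ast\omega=\omega$. On the other hand, in local coordinates $\sigma^\ast\omega=\sum dy^j\wedge d(-\eta^j)=-\omega$. Adding gives $\phi^\ast(\omega\oplus\omega)=\omega-\omega=0$, so $\Lambda$ is isotropic.

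Finally, an isotropic submanifold of dimension equal to half that of the ambient symplectic manifold is Lagrangian, and this completes the argument. The only subtle point is the sign flip $\eta\mapsto-\eta$ in the ``twist'': without it, the graph of $C$ would be Lagrangian with respect to $\omega\ominus\omega$, not $\omega\oplus\omega$, and the computation $\sigma^\ast\omega=-\omega$ is precisely what compensates for this. The remainder is routine.
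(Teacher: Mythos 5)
Your argument is correct and is the standard one: the embedding $(y,\eta)\mapsto(C(y,\eta),(y,-\eta))$, the pullback computation $C^*\omega+\sigma^*\omega=\omega-\omega=0$, and the dimension count upgrading isotropic to Lagrangian. The paper states this lemma as a ``simple fact'' without proof, so there is no argument to compare against; your proof supplies exactly the intended routine verification, including the correct explanation of why the sign twist is needed to get a Lagrangian for $\omega\oplus\omega$ rather than for the difference form.
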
  

The word `twisted' refers to the sign change in the second variable. The next observation is that, locally, a Lagrangian submanifold $\gL$ of $T^*_0M\times T^*_0M$ can be described by a phase function, see \cite[Theorem 21.2.16]{HIII} for a proof  

\begin{theorem}\label{T.1}
 Let $((x_0,\xi_0),(y_0,\eta_0))\in \gL$.  
 Then there exist neighborhoods $U$ of $x_0$,
 $V$ of $y_0$, an open  cone $\gG$ in $\R^d$ $($for suitable $d)$ and a function  $\varphi: U\times V\times \gG\to \R$, homogeneous of degree $1$ in $\gt$ and non-degenerate in the sense that the differentials $d(\partial_{\theta_j}\varphi))$, $j=1,\ldots,d$, are linearly independent, such that locally near 
$((x_0,\xi_0),(y_0,\eta_0))$ the set $\gL$ is given as 
$$
\{((x,\partial_x\gvp(x,y,\gt)), (y,-\partial_y\gvp(x,y,\theta))): \partial_\gt\varphi(x,y,\theta) = 0\}.$$
\end{theorem}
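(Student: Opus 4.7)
I would reduce the statement to the general result (Hörmander) that every conic Lagrangian in $T^*X\setminus 0$ admits a local non-degenerate phase function, applied to $X=M\times M$. Writing $\tilde\eta:=-\eta$, the set $\Lambda$ becomes a conic Lagrangian in $T^*(M\times M)\setminus 0$ with the standard symplectic form $d\xi\wedge dx+d\tilde\eta\wedge dy$; a phase function $\varphi(x,y,\theta)$ for this Lagrangian then yields $\xi=\partial_x\varphi$ and $\tilde\eta=\partial_y\varphi$, i.e.\ $\eta=-\partial_y\varphi$, which is precisely the claim.

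\textbf{Local coordinates and generating function.} At $p_0=((x_0,\xi_0),(y_0,\eta_0))$ the tangent space $T_{p_0}\Lambda\subset\R^{4n}$ is a Lagrangian subspace, and the standard linear-algebra fact that any Lagrangian subspace is transverse to at least one coordinate Lagrangian produces partitions $\{1,\dots,n\}=I\sqcup I'$ and $\{1,\dots,n\}=K\sqcup K'$ for which $(x^I,y^K,\xi^{I'},\tilde\eta^{K'})$ are local coordinates on $\Lambda$ near $p_0$; the complementary variables become smooth functions $\xi^I_\Lambda,\tilde\eta^K_\Lambda,x^{I'}_\Lambda,y^{K'}_\Lambda$ of them. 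Since $\omega|_\Lambda=0$, the $1$-form
\[
\xi^I\,dx^I+\tilde\eta^K\,dy^K-x^{I'}\,d\xi^{I'}-y^{K'}\,d\tilde\eta^{K'}
\]
has differential $\omega$ on $T^*(M\times M)$ and is therefore closed on $\Lambda$. Let $S(x^I,y^K,\theta)$ be a local primitive, with $\theta:=(\xi^{I'},\tilde\eta^{K'})\in\R^d$; then $\partial_{x^I}S=\xi^I_\Lambda$, $\partial_{y^K}S=\tilde\eta^K_\Lambda$, $\partial_{\xi^{I'}}S=-x^{I'}_\Lambda$, $\partial_{\tilde\eta^{K'}}S=-y^{K'}_\Lambda$. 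I would then set
\[
\varphi(x,y,\theta):=S(x^I,y^K,\theta)+x^{I'}\cdot\xi^{I'}+y^{K'}\cdot\tilde\eta^{K'}
\]
and check that $\partial_\theta\varphi=0$ is equivalent to $(x^{I'},y^{K'})=(x^{I'}_\Lambda,y^{K'}_\Lambda)$, while $\partial_x\varphi=\xi$ and $\partial_y\varphi=\tilde\eta$ reproduce the remaining $\Lambda$-relations. Non-degeneracy is immediate, because $d(\partial_{\xi^{I'}_j}\varphi)$ contains the independent summand $dx^{I'}_j$ and $d(\partial_{\tilde\eta^{K'}_k}\varphi)$ the independent summand $dy^{K'}_k$, so these differentials are pointwise linearly independent.

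\textbf{Main subtlety: homogeneity.} The most delicate step is to arrange that $\varphi$ is positively homogeneous of degree $1$ in $\theta$ on a conic neighbourhood $\Gamma\subset\R^d$. Conicity of $\Lambda$ forces $x^{I'}_\Lambda,y^{K'}_\Lambda$ to be homogeneous of degree $0$ in $\theta$ and $\xi^I_\Lambda,\tilde\eta^K_\Lambda$ of degree $1$, and an application of Euler's identity shows that the primitive admits the normalisation
\[
S(x^I,y^K,\theta)=-x^{I'}_\Lambda(x^I,y^K,\theta)\cdot\xi^{I'}-y^{K'}_\Lambda(x^I,y^K,\theta)\cdot\tilde\eta^{K'},
\]
which is manifestly of degree $1$ in $\theta$; since the added Legendre terms are also linear in $\theta$, $\varphi$ inherits the homogeneity. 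I expect this homogeneity bookkeeping---verifying that the prescribed normalisation is consistent with the primitive relations and that the homogeneity sits in $\theta$ alone, not spuriously in $(x,y)$---to be the principal technical point; the rest of the argument is a standard combination of Poincaré's lemma and a Legendre transform.
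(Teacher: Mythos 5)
Your construction is correct, but note that the paper does not actually prove Theorem \ref{T.1}: it is quoted from \cite[Theorem 21.2.16]{HIII}, so there is no internal proof to compare against. What you wrote is essentially the standard generating-function proof of that cited result, specialized to $X=M\times M$: pick a mixed base/fibre splitting so that $(x^I,y^K,\xi^{I'},\tilde\eta^{K'})$ restrict to coordinates on $\Lambda$, and use the vanishing of the tautological one-form on a conic Lagrangian to obtain the explicitly homogeneous primitive $S=-x^{I'}_\Lambda\cdot\xi^{I'}-y^{K'}_\Lambda\cdot\tilde\eta^{K'}$ (your ``Euler identity'' normalisation is exactly the identity $\xi\,dx+\tilde\eta\,dy=0$ on $\Lambda$, which holds because the radial field is tangent to $\Lambda$ and $\omega$ vanishes on $\Lambda$); the verification of the critical-set description, of $\partial_x\varphi=\xi$, $\partial_y\varphi=\tilde\eta$, and of non-degeneracy then goes through as you indicate. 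Two points you should make explicit to close the argument: (i) $d\ge 1$ and $\theta_0=(\xi_0^{I'},\tilde\eta_0^{K'})\neq 0$, which is needed for $\Gamma$ to be a genuine open cone away from the origin and for the homogeneity statements to have content; this follows because the radial vector at $p_0$ is a nonzero tangent vector of $\Lambda$ annihilated by $dx^I$ and $dy^K$, so it must have a nonvanishing $\theta$-component if $(x^I,y^K,\theta)$ are to be coordinates on $\Lambda$. (ii) The chart $(x^I,y^K,\theta)$ must be taken on a conic neighbourhood of $p_0$ in $\Lambda$ (possible since the coordinate functions are equivariant under the fibre dilations and $\Lambda$ is conic), which is what justifies the asserted degree-$0$ homogeneity of $x^{I'}_\Lambda,y^{K'}_\Lambda$ and degree-$1$ homogeneity of $\xi^I_\Lambda,\tilde\eta^K_\Lambda$. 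With these remarks your proof is complete; it differs from H\"ormander's own treatment mainly in that your mixed splitting generally uses fewer phase variables than the reference's normal form, but both routes are standard and equally valid here.
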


In other words, the map $\ga: \crit_\gvp\to \graph C$, given by 
\begin{eqnarray}\label{alpha}
\ga(x,y,\theta) = ((x,\partial_x\gvp(x,y,\theta)),(y,-\partial_y\gvp (x,y,\gt)) )
\end{eqnarray}
is a diffeomorphism from the critical set
$$\crit_\varphi= \{(x,y,\theta): \partial_\gt(x,y,\theta) =0\},$$  
onto a conical neighborhood of $((x_0,\xi_0),(y_0,\eta_0))$. 

A canonical transformation therefore locally is given by a non-degenerate phase function and conversely, a non-degenerate phase function determines (locally) 
a Lagrangian submanifold in $T_0^*M\times T^*_0M$. 

A bounded linear operator $\Phi: L^2(M) \to L^2(M)$ is a quantized canonical transformation, if it is microlocally given by oscillatory integral kernels determined by an amplitude function $a$ in a suitable Hörmander class $S^m_{\rm cl}(\R^n\times \R^n\times \R^d)$ and a phase function $\varphi$ associated with a canonical transformation $C$. In particular, the quantized canonical transformations form a subclass of the Fourier Integral Operators on $M$  

In order to be more precise, let $((x_0,\xi_0), (y_0,\eta_0)) \in T^*_0M\times T_0^*M$ with 
$(x_0,\xi_0) = C(y_0,\eta_0)$ and let, with the notation of Theorem \ref{T.1},  
$\varphi: U\times V\times \gG\to \R$ be a phase function locally 
describing the twisted graph of $C$. 

\begin{definition} 
We call $\Phi$ a {\em quantized canonical transformation}, if,
 in a conic neighborhood of $((x_0,\xi_0),(y_0,\eta_0))$, 
 the Schwartz kernel $K^\Phi$ of $\Phi$ can be written 
\begin{eqnarray}\label{2.5.1}
K^\Phi (x,y) = 
\int e^{i\varphi(x,y,\theta)} b(x,y,\theta)  \, d\gt  +K
\end{eqnarray} 
where $b$ in $S^{(n-d)/2}_{\rm cl}$ vanishes near $\gt=0$ and outside $U\times V\times \Gamma$ and $K$ is a $C^\infty$ kernel function. 
\end{definition} 
The full Schwartz kernel is then obtained with the help of a partition of unity. 
The choice of the order $(n-d)/2$ makes the induced operator continuous on 
$L^2(M)$. 

\begin{example}\label{ex.3}
Let $\ga: M\to M$ be a diffeomorphism. 
Then the `shift operator' $T_\ga\in \sL(L^2(M))$ given by 
$T_\ga u(x) = u(\ga^{-1}(x))$ is a quantized canonical transformation 
associated with the canonical transformation $C_\ga$ in Example {\rm \ref{ex.1}}. 

Indeed, for $u\in \sS(\R^n)$ we have 
\begin{eqnarray*}u(\ga^{-1}(x)) &=& (2\pi)^{-n} \int e^{i\skp{\ga^{-1}(x),\gt}} \hat u (\gt)\,d\gt 
=
(2\pi)^{-n} \int e^{i\skp{\ga^{-1}(x)-y,\gt}}  u (y)\,dy d\gt
\end{eqnarray*}
Hence, the Schwartz kernel has the (even globally defined) phase $\gvp(x,y,\gt)= \skp{\alpha^{-1}(x)-y,\gt}$. Applying the local formula for the associated Lagrangian submanifold in Theorem {\rm\ref{T.1}} yields the graph of $C_\ga$:
\begin{eqnarray*}\lefteqn{
\{((x, {}^t\partial_x \ga^{-1}(x)\gt),( y,\gt )): y=\ga^{-1}(x)\}}\\
&=&\{((\ga(y), {}^t\partial \ga^{-1}(\ga(y)) \gt), (y,\gt))\}
=\{((\ga(y), {}^t\partial \ga(y)^{-1} \gt), (y,\gt ))\}.
\end{eqnarray*}
\end{example}

\begin{example}
Consider the operator $e^{it\sqrt{\Delta}}$, for simplicity on $L^2(\R^n)$. 
Here, 
$$e^{it\sqrt\Delta} u(x) = (2\pi)^{-n} \int e^{i\skp{x-y,\gt}} e^{it|\gt|}u(y)\, dy d\gt, \quad u\in \sS(\R^n).$$
The phase function is $\gvp(x,y,\gt) = \skp{x-y,\gt} + t|\gt|$, parametrizing {\rm(}in
the sense of Theorem {\rm\ref{T.1}}{\rm)} the set 
$$\{((x,\gt)),(y,\gt)): x-y +t\gt/|\gt|=0\} = \{((y-t\gt/|\gt|,\gt),(y,\gt))\}.$$
This is the graph of the canonical transformation induced by the flow of the Hamiltonian
vector field associated to the function $H(\gt)=|\gt|$ on $T^*M $ in the sense 
of Example {\rm\ref{ex.1}(}b{\rm)}.   
\end{example}

\section{Operator Algebras} 
\subsection*{Operators} 
Let $G$ be a discrete group with unit element $e$ and let $\Phi_g$, $g\in G$, be quantized canonical transformations in $\sL(L^2(M))$ satisfying 
\begin{eqnarray}\label{e2.1}
\Phi_e \equiv I  \text{ and  } \Phi_g\Phi_h \equiv \Phi_{gh} \text{ modulo }
\sK(L^2(M)).
\end{eqnarray}
Write $C_g$ for the canonical transformation associated with $\Phi_g$. 

We shall consider operators of the form 
\begin{eqnarray}\label{e2.2}
D= \sum_{g\in  G} D_g\Phi_g +K,
\end{eqnarray} 
where the $D_g$ are classical pseudodifferential operators of order zero, only finitely many are different from zero, and $K\in \sK(L^2(M))$. 

The following lemma collects a few basic properties of these operators. 
\begin{lemma}\label{l3.1}
{\rm (a)}
The operators $\Phi_g$ are all Fredholm operators, and  $\Phi_{g^{-1}}$ furnishes a Fredholm inverse to $\Phi_g$. 

{\rm(b)} The operators of the form \eqref{e2.2} form an algebra. 
\end{lemma}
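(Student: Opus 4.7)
The plan is to derive both claims essentially from the cocycle relation \eqref{e2.1}, together with a version of Egorov's theorem for quantized canonical transformations.

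For part (a), I would apply \eqref{e2.1} with $h=g^{-1}$ to obtain
$$
\Phi_g\Phi_{g^{-1}}\equiv \Phi_e\equiv I,\qquad \Phi_{g^{-1}}\Phi_g\equiv \Phi_e\equiv I
$$
both modulo $\sK(L^2(M))$. Hence $\Phi_{g^{-1}}$ is a two-sided parametrix for $\Phi_g$. Atkinson's theorem then immediately yields that $\Phi_g$ is Fredholm on $L^2(M)$, with $\Phi_{g^{-1}}$ playing the role of Fredholm inverse. This half is essentially a one-line consequence of the defining relations.

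For part (b), closure under sum and scalar multiple is clear since each summand in \eqref{e2.2} has finite support in $g$ and the ideal $\sK(L^2(M))$ is a linear subspace. The non-trivial point is closure under composition. Given
$$
D=\sum_{g\in G} D_g \Phi_g+K,\qquad D'=\sum_{h\in G} E_h \Phi_h+K',
$$
the product expands, modulo manifestly compact remainders coming from $K$ and $K'$, into a finite sum of terms $D_g \Phi_g E_h \Phi_h$. The key move is to rewrite
$$
\Phi_g E_h = \bigl(\Phi_g E_h \Phi_{g^{-1}}\bigr)\Phi_g+R_{g,h},
$$
where $R_{g,h}=\Phi_g E_h(I-\Phi_{g^{-1}}\Phi_g)$ is compact by \eqref{e2.1}. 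Here I would invoke Egorov's theorem for quantized canonical transformations: conjugation of the zero-order classical pseudodifferential operator $E_h$ by the quantized canonical transformation $\Phi_g$ (with parametrix $\Phi_{g^{-1}}$) yields, modulo compacts, a zero-order classical pseudodifferential operator $\tilde E_{g,h}=\Phi_g E_h \Phi_{g^{-1}}$ whose principal symbol is $\sigma(E_h)\circ C_g^{-1}$. Substituting, using \eqref{e2.1} once more to replace $\Phi_g\Phi_h$ by $\Phi_{gh}$ modulo $\sK(L^2(M))$, and noting that products of zero-order classical pseudodifferential operators are again zero-order classical pseudodifferential operators, one finds
$$
D_g \Phi_g E_h \Phi_h \equiv (D_g \tilde E_{g,h})\Phi_{gh}\pmod{\sK(L^2(M))}.
$$
Summing the finitely many nonzero contributions and regrouping by the value $k=gh\in G$ presents $DD'$ again in the form \eqref{e2.2}.

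The main obstacle is step (b), and within it the invocation of Egorov's theorem: one must verify that the conjugate $\Phi_g E_h \Phi_{g^{-1}}$ indeed lies in the zero-order classical pseudodifferential calculus modulo compact operators, given only that $\Phi_g$ is a quantized canonical transformation in the sense of the previous section. This is standard for FIOs associated with homogeneous canonical transformations, but it is the one place where more than the abstract cocycle relation \eqref{e2.1} is used; all other steps are bookkeeping with compact remainders.
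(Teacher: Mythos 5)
Your proposal is correct and follows essentially the same route as the paper: part (a) as an immediate consequence of the relations \eqref{e2.1}, and part (b) by inserting $\Phi_{g^{-1}}\Phi_g$ and invoking Egorov's theorem to replace $\Phi_g E_h\Phi_{g^{-1}}$ by a zero-order pseudodifferential operator before contracting $\Phi_g\Phi_h$ to $\Phi_{gh}$ modulo $\sK(L^2(M))$. Your extra bookkeeping of the compact remainders only makes explicit what the paper leaves implicit.
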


\begin{proof} (a) is an immediate consequence of Condition \eqref{e2.1}. 

In order to establish (b), it suffices to consider a product $(D_g\Phi_g)(D_h\Phi_h)$ for $g,h\in G$. We recall Egorov's Theorem: Given a pseudodifferential operator $A$ of order zero we have
$$\Phi_g A \Phi_{g^{-1}}= \tilde A,$$ 
where $\tilde A$ also is pseudodifferential of order zero and the principal symbols satisfy the relation
$$\sigma_{\tilde A}(x,\xi)  = \sigma_A (C_{g^{-1}}(x,\xi)).$$
Hence, modulo $\sK(L^2(M))$,
$$D_g\Phi_g D_h\Phi_h\equiv D_g \Phi_gD_h\Phi_{g^{-1}}\Phi_g\Phi_h
\equiv D_g \tilde D_h\Phi_{gh}$$
is of the required form. 
\end{proof} 
 
\begin{lemma}\label{l3.2}
We may assume that the operators $\Phi_g$ are unitary 
modulo $\sK(L^2(M))$, i.e. we may assume that, in addition to the properties \eqref{e2.1},
$$(\Phi_g)^*\Phi_g \equiv I \equiv \Phi_g (\Phi_g)^*\text{ modulo } \sK(L^2(M)).$$
\end{lemma}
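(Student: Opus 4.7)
The plan is a polar-decomposition correction: for each $g\in G$ I would replace $\Phi_g$ by $\tilde\Phi_g := \Phi_g A_g^{-1/2}$, where $A_g := \Phi_g^*\Phi_g$. Since $\Phi_g^*$ is a quantized canonical transformation associated with $C_g^{-1}$, the composition $A_g$ is a Fourier integral operator whose canonical relation is the diagonal, hence a self-adjoint nonnegative pseudodifferential operator of order $0$ with principal symbol $|\sigma_{\Phi_g}|^2$. The Fredholm property of $\Phi_g$ (Lemma~\ref{l3.1}) forces this symbol to be nowhere vanishing, so $A_g$ is elliptic, and after a finite-rank (hence compact) perturbation I may assume $A_g \geq c_g I > 0$. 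Functional calculus for self-adjoint elliptic zero-order PsiDOs then yields $A_g^{-1/2}$ as a zero-order PsiDO with principal symbol $|\sigma_{\Phi_g}|^{-1}$. Composing a QCT with a PsiDO does not alter the underlying canonical transformation, so $\tilde\Phi_g$ is itself a QCT associated with $C_g$; and $\tilde\Phi_e \equiv I$ modulo compacts since $A_e\equiv I$.

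For the two unitarity conditions, a direct computation gives
\[
\tilde\Phi_g^*\tilde\Phi_g = A_g^{-1/2} A_g A_g^{-1/2} \equiv I \pmod{\sK(L^2(M))}.
\]
To obtain $\tilde\Phi_g\tilde\Phi_g^*\equiv I$ without redoing the calculation on the other side I would invoke a C*-algebraic shortcut: $\tilde\Phi_g$ is a product of Fredholm operators, hence Fredholm, so its class in the Calkin algebra $\sL(L^2(M))/\sK(L^2(M))$ is invertible; and an invertible isometry in any C*-algebra is automatically unitary.

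The last step, and the one I expect to require the most careful bookkeeping, is the multiplicativity $\tilde\Phi_g\tilde\Phi_h \equiv \tilde\Phi_{gh}$ modulo $\sK$. Applying Egorov's theorem (as already used in the proof of Lemma~\ref{l3.1}), I would write $A_g^{-1/2}\Phi_h \equiv \Phi_h A'$ modulo $\sK$, where $A'$ is a zero-order PsiDO with principal symbol $|\sigma_{\Phi_g}|^{-1}\circ C_h$; combined with the hypothesis $\Phi_g\Phi_h \equiv \Phi_{gh}$ modulo $\sK$ this yields $\tilde\Phi_g\tilde\Phi_h \equiv \Phi_{gh}\, A' A_h^{-1/2}$. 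The FIO principal-symbol composition rule, whose absolute-value part is insensitive to Maslov factors and half-density twists, forces $|\sigma_{\Phi_{gh}}| = (|\sigma_{\Phi_g}|\circ C_h)\cdot|\sigma_{\Phi_h}|$, so the zero-order PsiDOs $A' A_h^{-1/2}$ and $A_{gh}^{-1/2}$ have matching principal symbols; on the closed manifold $M$ they therefore differ by a classical operator of order $-1$, which is compact by Rellich's embedding, and the multiplicativity follows. The delicate point is precisely this matching of absolute-value principal symbols: it relies on having $\Phi_g\Phi_h \equiv \Phi_{gh}$ modulo $\sK$ rather than, say, merely up to a phase cocycle, and it is the only place where the argument depends on more than formal algebraic manipulations.
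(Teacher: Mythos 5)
Your argument is essentially the paper's: the paper also takes the unitary part of the polar decomposition, writing $\tilde\Phi_g=(\Phi_g\Phi_g^*)^{-1/2}\Phi_g$ (the left-handed twin of your $\Phi_g(\Phi_g^*\Phi_g)^{-1/2}$), identifies the positive factor as a nonnegative pseudodifferential operator via Egorov's theorem, and settles multiplicativity modulo $\sK(L^2(M))$ by the same Egorov-type computation as in Lemma \ref{l3.1}. Your write-up merely makes explicit the details the paper leaves informal (ellipticity from the Fredholm property, the Calkin-algebra argument for the second unitarity relation, and the matching of the absolute values of the principal symbols), and these are correct.
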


\begin{proof}By Egorov's theorem $\Phi_g(\Phi_g)^*$ is a nonnegative pseudodifferential operator. Somewhat informally we denote by
$\tilde \Phi_g=(\Phi_g(\Phi_g)^*)^{-1/2}\Phi_g$ the unitary part in the polar decomposition of $\Phi_g$. A computation as in the proof of Lemma \ref{l3.1}
shows the assertion.
\end{proof}

In the sequel we shall therefore assume that the $\Phi_g$ form an almost unitary 
representation as in Lemma \ref{l3.2}. 

\subsection*{Alternative: Finite-dimensional Lie groups}
Given a finite-dimensional Lie group $G$ and a unitary representation 
$$\rho: G\to \sL(L^2(M)), \qquad g\mapsto \Phi_g$$ 
of $G$ by quantized canonical transformations $\Phi_g$, we may also consider 
(under certain technical assumptions) operators of the form 
\begin{eqnarray*}
D= \int_G D_g \Phi_g \, d\mu(g): L^2(M) \to L^2(M),
\end{eqnarray*}
where $g\mapsto D_g$ is a smooth, compactly supported family of pseudodifferential operators of order zero and $d\mu(g)$ denotes an invariant measure on $G$. 
As operators of this form are rarely Fredholm, one then studies the 
index theory of operators of the form $I+D$. 
We shall not consider these operators in this note; details can be found in \cite{SSS2}.

\subsection*{Special cases} 
1. If the group is $\{e\}$, then $D$ is just a classical pseudodifferential operator,
and we obtain the well-known index problem of Atiyah and Singer.  
 
 2. If $D=\Phi_g$ for a single quantized canonical transformation, then determining the index is known as the Atiyah-Weinstein index problem. It has
been solved independently and by different methods by Epstein and Melrose \cite{EM98} for the case of canonical transformations on $T^*_0M$ and,
more generally, for canonical transformations between possibly different 
manifolds, by Leicht\-nam, Nest and Tsygan \cite{LNT01}. 

3. If the $\Phi_g$ are associated with the special canonical transformations 
in Example \ref{ex.1}(a), then the $\Phi_g$ are shift operators as pointed 
out in Example \ref{ex.3}. The corresponding algebra of operators has been 
studied by Antonevich and Lebedev e.g. in \cite{AnLe1}, Connes and Moscovici \cite{CoMo2}, Perrot \cite{Per5} as well as by Savin, Schrohe and Sternin \cite{SaScSt}, \cite{SaSt22},  \cite{Ster20}.  

4. The index problem considered by B\"ar and Strohmaier in \cite{BaSt1}
is closely related as it reduces to a Toeplitz variant of the problems considered here.  

5. If $G$ is a  Lie group acting on $M$ locally-freely, then the
$G$-operators coincide with the transverse pseudodifferential operators   
with respect to the foliation on $M$ defined by the orbits of the group action which were studied by Kordyukov in \cite{Kord3}.

More examples can be found in \cite{SSS2}.  

\subsection*{Symbols and ellipticity}
Recall that a  $C^*$-dynamical system is a triple $(A,G,\gb)$, 
consisting of a $C^*$- algebra $A$, a locally compact group $G$ and a strongly continuous homomorphism $\gb : G \to \operatorname{Aut} A$ (i.e.,
$g\mapsto \gb_gh$ is continuous for all  $h\in A$).

A covariant representation of a dynamical system  $(A, G, \gb)$ is a pair $(\pi, U )$, 
consisting of a representation $\pi : A \to \sL(H)$ and a unitary representation 
$U : G \to \sU(H)$ on the same Hilbert space such that
$\pi (\gb_g(a)) = U_g\pi (a)U_g^*$.

A pair $(\pi,U)$ induces a representation 
$\pi \rtimes U: C_c(G,A)\to \sL(H) $ by
$$(\pi\rtimes U)(f) = \int _G \pi(f(g))U_g\, d\mu(g).$$
We can define a norm on $C_c(G,A)$ by letting 
$$\|f\| = \sup\{\|(\pi\rtimes U)f\|: (\pi,U) \text{ covariant representation}\}.$$
The sup is finite, since $\|(\pi\rtimes U)f\|\le \|f\|_{L^1}$. 

\begin{definition}
The $($maximal$)$ crossed product $A\rtimes_\gb G$ is the closure of $C_c(G,A)$ in this norm. 
\end{definition} 

In the case at hand, we have (possibly after the modification in Lemma \ref{l3.2}) 
an almost unitary representation  
$\Phi_g: G\to \sL(L^2(M))$, i.e., a unitary representation with values in the 
Calkin algebra $\sL(L^2(M))/\sK(L^2(M))$. 
Since the latter is a $C^*$-algebra, it is a 
subalgebra of $\sL( H)$ for some Hilbert space $ H$. Denote by $\iota: 
\sL(L^2(M))/\sK(L^2(M))\to \sL(H)$ the embedding and by $q: \sL(L^2(M))\to 
\sL(L^2(M))/\sK(L^2(M))$ the canonical projection. 
Note that we have a strongly continuous action $\gb$ of  $G$ on the $C^*$-algebra $A= C(S^*M)$ via the canonical transformations: 
$$\gb(a) = C_g^{-1*}a$$
This yields a covariant representation  $(\pi,U)$ of the $C^*$-dynamical system $(C(S^*M), G, \gb)$ via 
\begin{eqnarray*}
\pi(a) &=& \iota(q(A)) \in \sL(H)\\
U_g &=& \iota(\Phi_g) \in \sU(H),
\end{eqnarray*}
where $A$ is any operator in the closure $\overline \Psi$ of the algebra of 
zero order pseudodifferential operators in $\sL(L^2(M))$ with symbol $a$.
Indeed, this is a consequence of Egorov's theorem, since 
$$U_g \iota (a) U_g^{-1} = \iota (C_g^{-1*}a).$$

\begin{definition}\label{D.ell}
The symbol associated to an operator $D$ as in \eqref{e2.2} is the collection 
$$\gs(D) = \{\gs(D_g)\}_{g\in G}$$ 
of the principal symbols 
of the pseudodifferential operators $D_g$. We consider $\gs(D)$ as an 
element of the crossed product algebra $C(S^*M) \rtimes G$.   

We call $D$ elliptic, if $\gs(D)$ is invertible in  $C(S^*M) \rtimes G$. 
\end{definition}

Note that we fix here the representation of  $D$ with the particular choice of  $D_g$'s  and $\Phi_g$'s.   

The considerations before Definition \ref{D.ell} show that the quantization map 
$$C_c(G, C(S^*M))\ni \{\gs(D_g)\}_{g\in G} \mapsto \sum_{g\in G}\iota(\gs(D_g))\iota (\Phi_g)\in \sL(H)$$
naturally extends to a map 
$$Q: C(S^*M)\rtimes G\to \sL(H).$$ 

\begin{theorem}
If $D$ is elliptic, then $D: L^2(M) \to L^2(M)$ is a Fredholm operator. 
\end{theorem}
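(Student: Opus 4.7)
My plan is to deduce the Fredholm property from Atkinson's theorem by exhibiting $q(D)$, the class of $D$ in the Calkin algebra $\sL(L^2(M))/\sK(L^2(M))$, as the image of $\gs(D)$ under a $*$-homomorphism coming from the quantization map. Once such a homomorphism is in hand, invertibility of $\gs(D)$ in $C(S^*M) \rtimes G$ transfers to invertibility of $q(D)$ in the Calkin algebra, and Atkinson's theorem finishes the job.

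The first step is to observe that the formula
$$
\tilde Q(\{a_g\}_{g\in G}) \;=\; \sum_{g\in G} q(A_g)\, q(\Phi_g),
$$
where $A_g$ is any zero order pseudodifferential operator with principal symbol $a_g$, gives a well-defined $*$-homomorphism from $C_c(G, C(S^*M))$ into the Calkin algebra. Independence from the choice of $A_g$ is immediate. That $\tilde Q$ respects products reduces to the covariance identity $q(\Phi_g)\,q(A)\,q(\Phi_g)^{-1}=q(\tilde A)$ with $\gs(\tilde A)(x,\xi) = \gs(A)(C_{g^{-1}}(x,\xi))$, which is exactly the instance of Egorov's theorem recorded in the proof of Lemma \ref{l3.1}; compatibility with the involution follows from Lemma \ref{l3.2}.

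Next I would extend $\tilde Q$ to the full maximal crossed product. Composing with the embedding $\iota$ recovers the covariant representation $(\pi, U)$ constructed just before Definition \ref{D.ell}, so by the very definition of the maximal crossed product norm we have
$$
\|\iota(\tilde Q(f))\|_{\sL(H)} \;\le\; \|f\|_{C(S^*M)\rtimes G}
\qquad \text{for every } f \in C_c(G, C(S^*M)).
$$
Since $\iota$ is isometric and $\iota(\sL(L^2(M))/\sK(L^2(M)))$ is closed in $\sL(H)$, $\tilde Q$ extends uniquely to a C$^*$-algebra homomorphism
$$
\tilde Q: C(S^*M)\rtimes G \;\longrightarrow\; \sL(L^2(M))/\sK(L^2(M)).
$$
By construction $\tilde Q(\gs(D)) = q(D)$, so ellipticity of $D$ implies invertibility of $q(D)$ in the Calkin algebra, and $D$ is Fredholm on $L^2(M)$.

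The main obstacle I expect is the verification that $\tilde Q$ is bounded for the universal crossed product norm, but this is essentially automatic once $(\pi, U)$ has been shown to be covariant. A more concrete alternative would be to construct a parametrix $R = \sum_g R_g \Phi_g \mod \sK(L^2(M))$ whose symbol approximates $\gs(D)^{-1}$ by an element of $C_c(G, C(S^*M))$, and to verify directly that $RD - I$ and $DR - I$ are compact using Egorov's theorem; this is more hands-on but less economical than the C$^*$-algebraic route above.
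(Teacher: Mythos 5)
Your proposal is correct and follows essentially the same route as the paper: the paper also uses the covariant representation $(\pi,U)$ furnished by Egorov's theorem (and Lemma \ref{l3.2}) to extend the quantization map to all of $C(S^*M)\rtimes G$, applies it to $\gs(D)^{-1}$ to get an inverse of $q(D)$ modulo compacts, and concludes by Atkinson. The only cosmetic difference is that you state the target of the extended homomorphism as the Calkin algebra itself, whereas the paper works with its isometric image $\iota\bigl(\sL(L^2(M))/\sK(L^2(M))\bigr)\subset\sL(H)$ and takes a representative of $Q(\gs(D)^{-1})$ as the Fredholm inverse.
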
 

\begin{proof} 
Given an elliptic operator $D$, the inverse $\gs(D)^{-1}$ exists in $C(S^*M)\rtimes G$.  Then $E=Q(\gs(D)^{-1}) $ furnishes an element of $\iota(\sL(L^2(M))/\sK(L^2(M))$. Any representative in $\sL(L^2(M))$ is a Fredholm inverse to $D$.
\end{proof}

\section{Analytic and Algebraic Indices}
The next task is to determine the Fredholm index of elliptic elements. 
Our long term goal is to do this using algebraic index theory in the spirit of Fedosov \cite{Fds7} and Nest and Tsygan \cite{NeTs1}. 
In  a first step we outline here how to establish the equality of analytic 
and algebraic indices. In fact, we  define localized versions of these indices and 
show their equality. 
To this end we specialize further and assume that $G$ is a finite extension of 
$\Z^d$. We denote by $e$ the unit element and by $\skp g$ the conjugacy class 
of $g\in G$. We furthermore suppose  that the operators $\Phi_g$, $g\in G$, 
are unitary and satisfy 
$$\Phi_e=I,\qquad \Phi_g\Phi_h=\Phi_{gh}.$$ 
We shall study an elliptic operator 
\begin{eqnarray*}
D=\sum_{g\in G} D_g\Phi_g
\end{eqnarray*}
as in \eqref{e2.2}. 
We make the additional assumption that the inverse to the 
symbol $\gs(D)$ is an element of the algebraic crossed product  
\begin{eqnarray}
\gs(D)^{-1} = \{r_g\}_{g\in G} \in C^\infty (S^*M)\rtimes_{\rm alg} G.
\end{eqnarray}

\subsection*{The localized analytic index}

\begin{definition}
We introduce the algebraic crossed product $\Psi^{-N} (M)\rtimes_{\rm alg} G$ 
where $\Psi^{-N}$, $N\in  \N_0$,  is the algebra of classical pseudodifferential operators of order $-N$ 
and $G$ acts on $\Psi^{-N}$ by conjugation: $A\mapsto \Phi_g A\Phi_g^{-1}$. 
\end{definition} 

\begin{lemma}
For each $N\ge 1$ there exists an operator $E \in\Psi^0(M)\rtimes_{\rm alg} G$ such that  
\begin{equation}\label{eq-ai1}
I-DE ,I-E D\in \Psi^{-N}(M)\rtimes G.
\end{equation}
\end{lemma}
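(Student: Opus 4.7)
The plan is a Neumann-series parametrix construction inside the crossed product, exactly as in the classical pseudodifferential case, but with Egorov's theorem used to make the principal symbol map a homomorphism in the twisted setting.

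First I would set up the symbol homomorphism. By hypothesis $\sigma(D)^{-1} = \{r_g\}_{g \in G}$ is a finitely supported element of $C^\infty(S^*M)\rtimes_{\rm alg} G$. For each $g$ with $r_g\neq 0$, pick a classical zero-order operator $R_g\in \Psi^0(M)$ with principal symbol $r_g$, and set $E_0 = \sum_g R_g\Phi_g\in \Psi^0(M)\rtimes_{\rm alg} G$. Egorov's theorem, applied as in the proof of Lemma \ref{l3.1}(b), shows that conjugation by $\Phi_g$ preserves $\Psi^{-k}(M)$ for every $k\geq 0$ and acts on principal symbols by $C_g^{-1*}$. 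Hence $\Psi^{-k}(M)\rtimes_{\rm alg} G$ is a two-sided ideal in $\Psi^0(M)\rtimes_{\rm alg} G$, and the principal symbol map of Definition \ref{D.ell} extends to an algebra homomorphism
$$\sigma:\Psi^0(M)\rtimes_{\rm alg} G\longrightarrow C^\infty(S^*M)\rtimes_{\rm alg} G$$
with kernel $\Psi^{-1}(M)\rtimes_{\rm alg} G$. By construction $\sigma(E_0)=\sigma(D)^{-1}$, so
$T:=I-DE_0$ and $S:=I-E_0 D$ both lie in $\Psi^{-1}(M)\rtimes_{\rm alg} G$.

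Next I would iterate. Since $\Psi^{-1}(M)\rtimes_{\rm alg} G$ is a subalgebra and the $N$-fold product lands in $\Psi^{-N}(M)\rtimes_{\rm alg} G$ (again by Egorov), define
$$E \;=\; E_0\sum_{k=0}^{N-1} T^{k}\;\in\;\Psi^0(M)\rtimes_{\rm alg} G.$$
The telescoping identity $DE=(I-T)\sum_{k=0}^{N-1} T^k=I-T^N$ gives $I-DE=T^N\in \Psi^{-N}(M)\rtimes G$. For the left-sided statement I would form the analogous left parametrix $E_L=\bigl(\sum_{k=0}^{N-1} S^{k}\bigr)E_0$, which satisfies $I-E_L D=S^N\in \Psi^{-N}(M)\rtimes G$, and then observe that $E$ and $E_L$ coincide modulo the ideal: writing $E_L=E_L(DE+T^N)=(I-S^N)E+E_L T^N$ gives
$E_L-E = E_L T^N - S^N E \in \Psi^{-N}(M)\rtimes_{\rm alg} G$.
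Consequently $I-ED = (I-E_L D)+(E_L-E)D = S^N+(E_L-E)D\in \Psi^{-N}(M)\rtimes G$, using that the ideal is two-sided.

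The main obstacle, conceptually, is not the Neumann iteration but the verification that $\sigma$ is a genuine algebra homomorphism for the twisted-convolution product on the crossed product. One must check that the cross terms $D_g\Phi_g\cdot R_h\Phi_h = D_g(\Phi_g R_h\Phi_g^{-1})\Phi_{gh}$ have principal symbol $\sigma(D_g)\cdot(\sigma(R_h)\circ C_g^{-1})$, matching the convolution $(a\cdot b)(k)=\sum_{gh=k}a(g)\,\beta_g(b(h))$ with $\beta_g=C_g^{-1*}$ built into Definition \ref{D.ell}. This is exactly the content of Egorov's theorem, but once it is in place the rest of the argument is essentially the classical parametrix construction adapted one step at a time.
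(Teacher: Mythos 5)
Your construction is essentially the paper's own proof: you take $E_0=\sum_g R_g\Phi_g$ with $\sigma(R_g)=r_g$ and run a finite Neumann series on the order $-1$ remainder, exactly as in \cite{SS1}-style parametrix arguments (the paper uses the left remainder $S_1=I-E_0D$ and sets $E=(I+S_1+\cdots+S_1^{N-1})E_0$, while you start from the right remainder). Your additional verifications --- that Egorov's theorem makes the principal symbol map a homomorphism on the crossed product and that the one-sided parametrix is also a parametrix on the other side modulo $\Psi^{-N}(M)\rtimes G$ --- are correct and merely spell out steps the paper leaves implicit.
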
  

\begin{proof}
Let 
$
 E_0=\sum_g R_g\Phi_g$, where the $\sigma(R_g)=r_g$. 
Then $S_1= I-E_0D$ and $S_2=I-DE_0$ are elements of $\Psi^{-1}(M)\rtimes G$.
We obtain the assertion by setting 
$ E = (I+S_1+\ldots+S_1^{N-1}) E_0$.
\end{proof}

The elements of $\Psi^{-N}(M)$ are of trace class whenever $N>\dim M$. 
This motivates the following definition. 
  
\begin{definition}
Given $g\in G$ and an element $\sum_l K_l \Phi_l\in \Psi^{-N}\rtimes_{\rm alg } G$, $N>\dim M$, we introduce  the trace functional 
\begin{equation}    
\Tr_g\Big(\sum_l K_l\Phi_l\Big)=   \sum\limits_{l\in\langle g\rangle}\tr\left(K_l\Phi_l\right),
\end{equation}
with the operator trace $\tr$ on $L^2(M)$ and define the  analytic index of $D$ localized at the conjugacy class $\skp g$ by 
$$
 \ind_g D=\Tr_g(1-E  D)-\Tr_g(1-DE )=\Tr_g[D,E]\in \mathbb{C},
$$
where $E$ is an inverse modulo $\Psi^{-N} \rtimes_{\rm alg} G$. 
\end{definition} 
 
 The following is Proposition 19 in \cite{SS1}:
\begin{lemma}
{\rm(a)} The localized index $\ind_g D$ is independent of the choice of the almost-inverse operator and therefore a well-defined   
 invariant of the complete symbol of $D$.
 
{\rm (b)} The Fredholm index of $D$ is given by 
$$
  \ind D=\sum_{\langle g\rangle\subset G}\ind_g D,
 $$  
 where the sum is over all conjugacy classes in  $G$;


\end{lemma}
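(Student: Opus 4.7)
\medskip
\noindent\textbf{Proof plan.}\;
The plan is to base everything on a single cyclic-trace property of the localized functional $\Tr_g$ on the trace-class ideal $\Psi^{-N}(M)\rtimes_{\rm alg} G$, $N>\dim M$. Once that is in hand, (a) follows because two almost-inverses of $D$ differ by an element whose commutator with $D$ lies in the ideal, and (b) follows from the Calder\'on formula for the Fredholm index combined with the partition of $G$ into conjugacy classes.

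First I would verify the cyclicity of $\Tr_g$. For monomials $A=A_a\Phi_a$, $B=B_b\Phi_b$ whose product is of trace class on $L^2(M)$, the definition reads
$$
\Tr_g(AB)=\begin{cases}\tr(A_a\Phi_a B_b\Phi_b),& ab\in\skp g,\\ 0,&\text{otherwise,}\end{cases}
\qquad
\Tr_g(BA)=\begin{cases}\tr(B_b\Phi_b A_a\Phi_a),& ba\in\skp g,\\ 0,&\text{otherwise.}\end{cases}
$$
Since $ba=a^{-1}(ab)a$ is conjugate to $ab$, the two indicators agree; cyclicity of the operator trace on $L^2(M)$ then equates the two trace values. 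Linearity and the finiteness of sums in the algebraic crossed product upgrade this to $\Tr_g(XY)=\Tr_g(YX)$ whenever both sides are defined.

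For (a), given two almost-inverses $E,E'$ of $D$, both $D(E-E')=(DE-I)-(DE'-I)$ and $(E-E')D=(ED-I)-(E'D-I)$ lie in $\Psi^{-N}(M)\rtimes_{\rm alg} G$, so cyclicity yields $\Tr_g\bigl([D,E-E']\bigr)=0$ and hence $\Tr_g[D,E]=\Tr_g[D,E']$. For (b), ellipticity makes $D$ Fredholm and any $E$ from the preceding lemma a parametrix with $I-ED$, $I-DE$ of trace class; the classical Calder\'on formula reads $\ind D=\tr(I-ED)-\tr(I-DE)$. Writing $I-ED=\sum_{l\in G} R_l\Phi_l$ (a finite sum, since $D$ and $E$ each have only finitely many nonzero group components) and partitioning $G=\bigsqcup_{\skp g}\skp g$, one gets
$$
\tr(I-ED)=\sum_{l\in G}\tr(R_l\Phi_l)=\sum_{\skp g\subset G}\sum_{l\in\skp g}\tr(R_l\Phi_l)=\sum_{\skp g\subset G}\Tr_g(I-ED),
$$
and analogously for $I-DE$. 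Subtracting produces $\ind D=\sum_{\skp g}\ind_g D$.

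The main obstacle is really just the cyclicity step: one has to track carefully which factors are smoothing enough for the relevant products to be trace class (this is where $N>\dim M$ enters), and check that the conjugacy-class indicators for $ab$ and $ba$ really agree. Both points are benign once one works throughout with the algebraic crossed product; the remainder of the argument is bookkeeping with tools (ellipticity, parametrix construction, operator-trace cyclicity) that are already in place.
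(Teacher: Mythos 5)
Your proof is correct and takes the route this survey implicitly relies on (it gives no proof itself, deferring to \cite[Prop.~19]{SS1}): cyclicity of the localized trace $\Tr_g$, which kills $\Tr_g[D,E-E']$, plus the Fedosov--Calder\'on formula $\ind D=\tr(I-ED)-\tr(I-DE)$ and the partition of $G$ into conjugacy classes. Two small points to tighten: in (a) it is cleaner to note that $E-E'=E(I-DE')-(I-ED)E'$ lies in $\Psi^{-N}(M)\rtimes_{\rm alg}G$, so in each monomial comparison one factor is trace class and the other bounded and the identity $\tr(TB)=\tr(BT)$ applies directly (avoiding the subtler ``both products trace class'' version of cyclicity); and the clause ``invariant of the complete symbol'' also needs the one-line remark that replacing $D$ by $D+R$ with $R\in\Psi^{-\infty}(M)\rtimes_{\rm alg}G$ changes the index by $\Tr_g[R,E]=0$, again by the same trace property.
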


\subsection*{Semiclassical calculus} 
In order to define the algebraic index of the operator $D$, we introduce a 
semiclassical calculus for operators of this type. 
In fact, given $h>0, \gve>0$ and $N\in \N_0$ we define from the operator $\Phi_g$  operators $\Phi_{g,h,\gve,N}$ as follows. 

 Given a point $((x_0,\xi_0), (y_0,\eta_0))$ in the graph of $C_g$ in $T^*_0M\times T^*_0M$ 
 suppose that the integral 
 kernel $K^{\Phi_g} $ of $\Phi_g$ is locally given, modulo smooth kernels, 
 by an oscillatory integral as in \eqref{2.5.1}.
Let 
$$
b(x,x',\theta)\sim \sum_{j\ge 0} b_j(x,x',\theta)  \text{ as $|\theta|\to \infty$},
$$
with $b_j(x,x',\theta)$ homogeneous of degree $(n-d)/2-j$ in $\theta$ be the 
asymptotic expansion of $b$. 
 
We define the semiclassical Fourier integral operator $\Phi_{h,\varepsilon,N}$ associated with $\Phi$ as the operator with the integral  kernel
\begin{equation}\label{ker2}
K^\Phi_{h,\varepsilon,N}(x,y) = h^{-d/2-n/2}\int e^{\frac i h \varphi(x,y,\theta)} \sum_{0\le j< N}h^j b_j(x,y,\theta) \chi (x,y,\theta) d\theta,
\end{equation}
where the smooth function $\chi$ is chosen such that, with 
$\alpha$ defined in \eqref{alpha},
\begin{eqnarray*}\label{chi}
\chi(x,y,\theta)=
 \left\{
  \begin{array}{ll}
   1 & \text{in an open neighborhood of  }\alpha^{-1}(T^*_0M\times\{|\eta|\ge\varepsilon\}),\\ 
  & \text{which is conic at infinity,} \vspace{2mm}\\
   0 & \text{in a small neighborhood of the zero section}.   
  \end{array}
 \right.
\end{eqnarray*}
Here, $|\eta|$ is defined via a choice of a Riemannian metric on $M$, 
and a subset $U$ of $M\times M\times \R^d$  is called conic at infinity, if there exists an $R>0$ such that 
$(x,y,\gl \gt)\in U$ whenever $|\gt|\ge R$, $\gl\ge1$ and $(x,y,\gt)\in U$.

We introduce  the semiclassical Sobolev spaces:
\begin{definition}
\label{Hsh}
{\rm (a)} 
The space $H_h^s(M)$ is the set of all distributions $u$ on $M$
such that
$$\|u\|_{H^s_h}= \|(h^2\Delta +1)^{s/2}u\|_{L^2}<\infty,$$ 
where $\Delta$ stands for the nonnegative Laplacian on $M$.

{\rm (b)} An $O(h^N)$-operator family is a family of operators of order  $-N$
whose norm in $\sL(H_h^s(M), H_h^{s+N}(M))$  is of the order $O(h^N)$ as $h\to 0$ for every $s$.
\end{definition}

A  semiclassical symbol $a=a(x,\xi,h)$  of order $m$ in  a chart in $T^*M$ with coordinates $(x,\xi)$ is a smooth family of symbols  
with parameter $h>0$ and an asymptotic expansion 
\begin{equation*}\label{eq-2}
a(x,\xi,h)\sim \sum_{j\ge 0} h^j a_j(x,\xi), \quad \text{as $h\to 0$},
\end{equation*}
where $a_j(x,\xi)\in S^{m-j}$, i.e., 
for all $N\ge 0$ 
$$
 h^{-N}\Bigl(a(x,\xi,h)- \sum_{0\le j\le N} h^j a_j(x,\xi)\Bigr) \longrightarrow 0 \quad \text{in }S^{m-N}\text{ as }h\to 0.
$$
We call $a_0\in S^{m}$ the {\em leading symbol} of $a$.

A semiclassical symbol $a$ defines a pseudodifferential operator $\op_h(a)$. In 
local coordinates
$$
 \op_h(a)u(x)=\frac{1}{(2\pi h)^n} \iint e^{\frac{i}{h}(x-y)\xi}a(x,\xi,h)u(y)dyd\xi.
$$

We then obtain the following result whose proof occupies a large part of 
\cite{SS1}: 

\begin{proposition}\label{fio1}
\begin{enumerate}\renewcommand{\labelenumi}{(\alph{enumi})}
\item {\rm (Correctness of the definition)}  The operator family $\Phi_{h,\varepsilon,N}$ with integral kernel \eqref{ker2} is   independent of the choice of the representation \eqref{2.5.1} and the function $\chi$  modulo sums of $O(h^N)$-families  and families, which become $O(h^\infty)$-families when composed to the right by $\op_h(a)$ where $a(y,\eta)$ vanishes for $|\eta|<2\gve$. 
 
 \item {\rm (Composition formula)} Given quantized canonical transformations $\Phi',\Phi''$ associated with $C'$ and $C''$ and a  semiclassical symbol $a$, we find that for  $\Phi=\Phi'\Phi''$
\begin{equation*}\label{eq-ff1}
 \Phi_{h,\varepsilon,N}\op_h(a)=\Phi'_{h,\varepsilon,N} \Phi_{h,\varepsilon,N}''\op_h(a) \mod O(h^N),
\end{equation*} 
provided that $\varepsilon>0$ is chosen such that
$a$ vanishes on the subsets $\{|\xi|<\varepsilon\},{C''}^{-1}\{|\xi|<\varepsilon\}\subset T^*_0M$. 
A similar statement holds, if we take the product with $\op_h(a)$ on the left and choose $\varepsilon$ appropriately. 

 \item {\rm (Semiclassical Egorov theorem)}
Given a semiclassical symbol $a$, which   vanishes on the sets $\{|\xi|<\varepsilon\}, C^{-1}\{|\xi|<\varepsilon\}\subset T^*M$,
the composition
\begin{equation*}\label{op-3}
  \Phi_{h,\varepsilon,N} \op_h(b) \Phi^{-1}_{h,\varepsilon,N},
\end{equation*}
where $\Phi^{-1}_{h,\varepsilon,N}$ is associated with $\Phi^{-1},$
is a semiclassical pseudodifferential operator with symbol equal to
\begin{eqnarray*}
{
\sigma(\Phi_{h,\varepsilon,N} \op_h(a) \Phi^{-1}_{h,\varepsilon,N})}
=\Bigl(1+\sum_{1\le k\le N, 0<|\alpha|+|\beta|\le 2k}h^k \mu_{k,\alpha,\beta} D^\alpha_x D^\beta_\xi \Bigr)(C^{-1})^*a 
\end{eqnarray*}
modulo symbols  inducing $O(h^N )$-families.
The coefficients $\mu_{k,\alpha,\beta}(x,\xi)$ are homogeneous functions in $\xi$ of  degree $|\beta|-k$,
and are expressed in terms
of the amplitudes and phase functions of $\Phi$ and $\Phi^{-1}$. They do not depend on the choice of the cut-off functions or  $\varepsilon$. 
\end{enumerate}
\end{proposition}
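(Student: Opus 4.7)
The unifying tool for all three parts is semiclassical stationary phase, applied to oscillatory integrals with large parameter $1/h$. The cutoff $\chi$ localizes the $\theta$-integration away from the zero section, where the phase would be stationary but not of canonical-transformation type, while the hypothesis that the semiclassical symbol $a$ vanish on $\{|\xi|<\varepsilon\}$ (and its appropriate pull-backs under $C$ or $C''$) ensures that the relevant compositions take place entirely in the region where the canonical transformations are defined and where the cutoffs are equal to one on the critical set.

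For part (a), I would separately treat the two sources of non-uniqueness in the representation \eqref{2.5.1}: the choice of phase function parametrizing the Lagrangian $\Lambda$, and the choice of cutoff $\chi$. For the phase-function ambiguity, I would invoke the classical equivalence theorem for non-degenerate phase functions (H\"ormander): any two such phases parametrizing the same conic Lagrangian are, after possibly adding dummy fiber variables, related by a fibered diffeomorphism. Applying stationary phase in these dummy variables converts one amplitude expansion into the other with remainder $O(h^N)$ after truncation at order $N$, and the classical transition formulas for the sub-principal terms pass without change to the semiclassical setting. For the cutoff ambiguity, the difference $\chi-\chi'$ vanishes in a conic neighborhood of $\alpha^{-1}(T_0^*M\times\{|\eta|\ge\varepsilon\})$; hence when the resulting kernel is composed on the right with $\op_h(a)$, $a$ vanishing on $\{|\eta|<2\varepsilon\}$, I would integrate by parts in $\theta$ using the non-stationarity of the phase on the remaining support to gain arbitrarily many factors of $h$, producing the claimed $O(h^\infty)$ contribution.

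For part (b), I would write the kernel of $\Phi'_{h,\varepsilon,N}\Phi''_{h,\varepsilon,N}\op_h(a)$ as an iterated oscillatory integral with combined phase $\varphi'(x,z,\theta')+\varphi''(z,y',\theta'')+(y'-y)\eta/h$, and apply stationary phase in the intermediate variables $(z,\theta'',\eta)$. The support conditions on $a$ guarantee a clean composition: the critical set projects diffeomorphically onto the graph of $C=C'C''$, the integral collapses to an oscillatory integral whose phase parametrizes this graph, and the resulting amplitude expansion (through order $N-1$ in $h$) matches that of $\Phi_{h,\varepsilon,N}$ on the nose once we use the independence result of part (a). Part (c) is essentially the same argument applied to the three-fold composition of the kernels of $\Phi_{h,\varepsilon,N}$, $\op_h(b)$ and $\Phi^{-1}_{h,\varepsilon,N}$: since $C\circ C^{-1}=\mathrm{id}$, the critical set projects diffeomorphically onto $T_0^*M$, so the result is a semiclassical pseudodifferential operator; the leading symbol is $(C^{-1})^*a$ by the classical Egorov computation, and the successive terms of the stationary-phase expansion produce the differential operators $D^\alpha_x D^\beta_\xi$ acting on $(C^{-1})^*a$. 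The coefficients $\mu_{k,\alpha,\beta}$ are then the familiar polynomial combinations of derivatives of the phases and amplitudes evaluated on the critical set, and their homogeneity degree $|\beta|-k$ in $\xi$ is forced by the homogeneity properties of the classical ingredients of $\Phi$ and $\Phi^{-1}$.

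The main obstacle will be the precise bookkeeping of error terms in parts (b) and (c): one must verify that at each order the remainder is genuinely an $O(h^N)$-family in the sense of Definition \ref{Hsh}(b), i.e., an operator of order $-N$ whose norm in the semiclassical Sobolev scale is of size $O(h^N)$, and not merely a kernel with small pointwise or integrable size. This forces one to track all derivatives of the amplitudes uniformly in $h$ and in the homogeneity parameter $|\theta|$, which is where the interaction of the classical H\"ormander calculus with the semiclassical parameter is most delicate. Independence of the choice of representation in (b) and (c) is then inherited from (a), but only after one has arranged for all the cutoffs appearing in the intermediate steps to be compatible with the support hypotheses on $a$; checking this compatibility carefully is the final piece of work.
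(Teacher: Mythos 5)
You should note first that this survey does not actually prove Proposition \ref{fio1}: the text states explicitly that the proof ``occupies a large part of \cite{SS1}'' and gives no argument, so there is no in-paper proof to measure your outline against. Your strategy --- semiclassical stationary/non-stationary phase, H\"ormander's equivalence-of-phase-functions theorem for the phase ambiguity in (a), non-stationary phase on the support of $\chi-\chi'$ after composition with $\op_h(a)$ for the cutoff ambiguity, and iterated oscillatory integrals collapsing onto the graph of $C'C''$ (resp.\ the identity) for (b) and (c) --- is the standard and expected route, and it is consistent in spirit with the construction in \cite{SS1}.

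As a proof, however, what you have written is a plan with the decisive steps deferred. Two gaps are concrete. First, you never establish the remainder estimates in the sense of Definition \ref{Hsh}(b): the statement requires the errors to be $O(h^N)$-families, i.e.\ operators of order $-N$ with norm $O(h^N)$ in $\sL(H^s_h,H^{s+N}_h)$ for every $s$, and you yourself flag this as ``the main obstacle'' without resolving it; pointwise or $L^1$ smallness of kernels produced by stationary phase does not by itself give these mapping properties uniformly in $h$. Second, the mechanism linking the classical homogeneous expansion $b\sim\sum b_j$ (homogeneity $(n-d)/2-j$ in $\theta$) to the powers of $h$ in \eqref{ker2} --- essentially the rescaling $\theta\mapsto\theta/h$ permitted by the homogeneity of $\varphi$ and of the $b_j$ --- is the crux of all three parts: it is what makes the truncated sum in \eqref{ker2} well defined modulo the stated errors, what guarantees in (b) that the stationary-phase expansion of $\Phi'_{h,\varepsilon,N}\Phi''_{h,\varepsilon,N}\op_h(a)$ reproduces the semiclassical quantization of the classical composite amplitude of $\Phi=\Phi'\Phi''$ rather than merely some $h$-expansion, and what forces the homogeneity degree $|\beta|-k$ of $\mu_{k,\alpha,\beta}$ in (c). You assert these points (``matches on the nose'', ``forced by the homogeneity properties'') but do not derive them, and the independence of $\mu_{k,\alpha,\beta}$ from $\chi$ and $\varepsilon$ likewise needs an argument, not only the appeal to (a). Finally, in (b) and (c) the compatibility of the intermediate cutoffs with the hypotheses that $a$ vanish on $\{|\xi|<\varepsilon\}$ and ${C''}^{-1}\{|\xi|<\varepsilon\}$ (resp.\ $C^{-1}\{|\xi|<\varepsilon\}$) is exactly where wrong choices of $\varepsilon$ would break the argument; you acknowledge this but leave it unchecked. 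In short: right approach, but the quantitative core that makes the proposition true is still missing.
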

 
\subsection*{The localized algebraic index}
We denote by $\mathbb A$ the algebra of all zero order semiclassical pseudodifferential operators whose symbols vanish in a neighborhood of the zero section. 
According to Proposition {\rm\ref{fio1}(c)} the mapping
$$\A\times G \ni (a,g) \mapsto \sigma(\Phi_{h,\varepsilon,N} \op_h(a) \Phi^{-1}_{h,\varepsilon,N})$$  
{\rm(}for sufficiently small $\gve>0)$ defines an action of $G$ on $\A$. 
We can therefore define the algebraic crossed product $\A \rtimes_{\rm alg}G$.  We denote the product in this algebra by $*$. 
 
Let $\B=(\A\rtimes_{\rm alg}G)^+$ be the algebra $\A \rtimes_{\rm alg}G$ with a unit adjoined. Its
elements are given as collections
\begin{equation}\label{coll1}
    \Big\{\sum_{j\ge 0}h^j a_{l,j}(x,\xi)\Big\}_{l\in G},
\end{equation}
where $a_{l,j}(x,\xi)\in \A$ for $l\not=e$ while  $a_{e,0}(x,\xi)$ is allowed 
to be equal to a nonzero constant in a neighborhood of the zero section in $T^*M$. Denote by $\mathbb{B}_{N}\subset \mathbb{B}$
the ideal of elements \eqref{coll1} with  coefficients of order 
$\le -N$.

We call  $a\in \B$ elliptic, if its leading (semiclassical) symbol 
$a_0\in S^0\rtimes_{\rm alg}G$ is 
invertible modulo symbols of order $-1$. 

\begin{lemma}
Let   $a\in \mathbb{B}$ be an elliptic symbol of order zero. Then for each $N\ge 1$ there exists a symbol $r_N \in \mathbb{B}$ such that  
\begin{equation}\label{eq-ai2}
 1-a*r_N,1-r_N*a  \in \mathbb{B}_{N}.
\end{equation} 
\end{lemma}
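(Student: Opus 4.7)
The plan is to mimic the parametrix construction carried out earlier for $D$, building $r_N$ as a truncated Neumann series starting from a lift of a classical inverse of the leading symbol of $a$, and exploiting the fact that $\{\mathbb{B}_k\}$ is a multiplicative filtration on $\mathbb{B}$.

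I would first use ellipticity: since the leading $h^0$-part $a_0 \in S^0 \rtimes_{\rm alg} G$ is invertible modulo $S^{-1}\rtimes_{\rm alg} G$, there exists $b_0 \in S^0\rtimes_{\rm alg} G$ with $a_0\cdot b_0 - 1 \in S^{-1}\rtimes_{\rm alg} G$, where $\cdot$ denotes the classical twisted product on the crossed product of classical symbols. Viewing $b_0$ as an element of $\mathbb{B}$ (all higher $h$-coefficients zero), I compute $s = 1 - a*b_0$ in $\mathbb{B}$. Proposition~\ref{fio1}(c) together with the semiclassical composition formula of Proposition~\ref{fio1}(b) guarantees that the $h^0$-part of $a*b_0$ coincides with $a_0\cdot b_0$, while the $h^j$-coefficient for $j\ge 1$ has classical order $\le -j$ because each $h$-factor brings either a coefficient of $a$ of order $\le -j$ or a $\xi$-derivative that drops the order by one. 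Hence $s \in \mathbb{B}_1$.

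I then set
$$
r_N \;=\; b_0 * (1 + s + s^2 + \cdots + s^{N-1}) \;\in\; \mathbb{B}.
$$
A routine order count shows $\mathbb{B}_j * \mathbb{B}_k \subset \mathbb{B}_{j+k}$; in particular $s^N \in \mathbb{B}_N$. The telescoping identity
$$
1 - a*r_N \;=\; 1 - (a*b_0)*(1+s+\cdots+s^{N-1}) \;=\; 1 - (1-s)(1+s+\cdots+s^{N-1}) \;=\; s^N
$$
then delivers $1-a*r_N \in \mathbb{B}_N$. For the other side I construct an analogous left parametrix $\ell_N$ starting from a $b_0'$ with $b_0'\cdot a_0 - 1 \in S^{-1}\rtimes_{\rm alg} G$, obtaining $1 - \ell_N * a \in \mathbb{B}_N$. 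The standard argument
$$
\ell_N \;\equiv\; \ell_N * (a*r_N) \;=\; (\ell_N*a)*r_N \;\equiv\; r_N \pmod{\mathbb{B}_N}
$$
shows that $r_N$ also serves as a left inverse modulo $\mathbb{B}_N$.

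The main obstacle is verifying that the semiclassical crossed product on $\mathbb{B}$ truly reduces, at leading order in $h$, to the classical twisted product on $S^0\rtimes_{\rm alg} G$, so that one application of ellipticity advances $1-a*b_0$ by exactly one filtration step. This is precisely the content of parts (b) and (c) of Proposition~\ref{fio1}: (b) ensures that $*$ is a well-defined associative operation compatible with the filtration, while (c) identifies the conjugation action of $G$ on $\mathbb{A}$ with the classical pullback $(C_g^{-1})^*$ modulo $h$. Once this is granted, the Neumann iteration closes off routinely.
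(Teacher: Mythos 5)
Your proposal is correct and follows essentially the same route as the paper: invert the leading symbol $a_0$ using ellipticity, observe that the resulting remainder lies in $\mathbb{B}_1$, and close off with a truncated Neumann series $r_N=b_0*(1+s+\cdots+s^{N-1})$, exactly as the paper does with $r=r_0*(1+w+\cdots+w^N)$. The only difference is cosmetic: you make the left-inverse step explicit via a left parametrix and the standard two-sided argument, where the paper merely remarks that ``a computation shows'' $r*a-1\in\mathbb{B}_{N+1}$.
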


\begin{proof}
Since $a$ is elliptic, there exists $r_0\in S^{0}(T^*_0M)\rtimes G$ such that $1-a_0*r_0$ is of order $-1$. Then, modulo $\B_{N+1}$, 
\begin{equation*}
 a*r_0\equiv\Big(a_0+\sum_{1\le j\le N}h^ja_j\Big)*r_0=a_0*r_0+\sum_{1\le j\le N}h^ja_j*r_0 \equiv 1-w, 
\end{equation*}
where  
$w\equiv(1-a_0*r_0)-\sum_{1\le j\le N}h^ja_j*r_0 \in \mathbb{B}_{1}$.
Hence
$$
a*r_0*(1+w+w*w+...+w^N)\equiv(1-w)*(1+w+w*w+...+w^N)=1-w^{N+1},
$$
with $w^{N+1}\in \mathbb{B}_{N+1}$m and $ r=r_0*(1+w+w*w+...+w^N)$ 
furnishes the desired right inverse. A computation shows  that also $r*a-1\in \mathbb{B}_{N+1}$. 
\end{proof}

A crucial fact now is the following theorem, shown in \cite{SS1}.

\begin{theorem}
Let $a\in \A$ be of order $<-2\dim M$ and let $h\in G$ be associated with the canonical transformation $C$. Suppose that $C$ is of finite order, i.e. $C^k=I$ for some $k$, and denote by $T^*M^C$ the fixed point set of $C$. Then the operator 
$\op_h(a) \Phi_{h,\gve,N}$ is of trace class for every $h$ and we have an asymptotic expansion 
\begin{equation}\label{eq-5rr}
 \tr(\op_h(a)\Phi_{h,\varepsilon,N})\sim h^{-\dim  T^*M^C/2}\sum_{j\ge 0} \alpha_jh^j
\end{equation}
in integer powers of $h$ $($the fixed point sets $T^*M^C$ are even-dimensional by  \cite{Fds15}$)$.  
The coefficients $\alpha_j$ in \eqref{eq-5rr} do not depend on the choices in the construction  of $\Phi_{h,\varepsilon,N}$ up to $j=N-1$.
\end{theorem}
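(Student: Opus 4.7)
The plan is to express the trace as a four-fold oscillatory integral, identify the critical manifold with $T^*M^C$, and apply the clean stationary phase formula. Substituting \eqref{ker2} for the kernel of $\Phi_{h,\varepsilon,N}$ and the standard formula $K^{\op_h(a)}(x,y) = (2\pi h)^{-n}\int e^{i\langle x-y,\xi\rangle/h} a(x,\xi,h)\, d\xi$ into $\tr(\op_h(a)\Phi_{h,\varepsilon,N}) = \iint K^{\op_h(a)}(x,y) K^\Phi_{h,\varepsilon,N}(y,x)\, dy\, dx$ reduces the trace, modulo a partition of unity and $O(h^\infty)$ remainders from regions where the phase is non-stationary, to
$$h^{-3n/2-d/2}(2\pi)^{-n}\int e^{i\Psi(x,y,\xi,\theta)/h}\,a(x,\xi,h)\sum_{0\le j<N} h^{j}\,b_j(y,x,\theta)\,\chi(y,x,\theta)\, dx\, dy\, d\xi\, d\theta$$
with combined phase $\Psi(x,y,\xi,\theta)=\langle x-y,\xi\rangle+\varphi(y,x,\theta)$. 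The hypothesis $\text{ord}(a)<-2\dim M$ supplies enough decay in $(\xi,\theta)$ to guarantee trace-class and absolute convergence.

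Setting the four partial derivatives of $\Psi$ to zero gives $x=y$, $\partial_\theta\varphi=0$, $\xi=\partial_y\varphi$ and $\xi=-\partial_x\varphi$. Interpreting these equations via the parametrization of the twisted graph $\Lambda$ in Theorem \ref{T.1}, the projection of the critical set to $T^*M$ is precisely the fixed-point set $T^*M^C$. The key analytic step is to verify that this critical set is clean, with transverse Hessian of constant rank $3n+d-\dim T^*M^C$. Here the finite-order hypothesis $C^k=I$ is essential: at each fixed point the linearization $dC$ is a symplectic matrix with $(dC)^k=I$, so its eigenspace for the eigenvalue $1$ coincides with the tangent space to $T^*M^C$ and $dC-I$ is invertible on the symplectic complement. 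A local computation translates this spectral transversality into the non-degeneracy of the transverse Hessian of $\Psi$.

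Once cleanness is established, the clean stationary phase formula (H\"ormander, Analysis of Linear PDOs, Vol.\ I, Thm.\ 7.7.6) yields an expansion in powers of $h$ whose leading factor is $h^{(3n+d-\dim T^*M^C)/2}$. Combined with the prefactor $h^{-3n/2-d/2}$ this produces the leading order $h^{-\dim T^*M^C/2}$ asserted in \eqref{eq-5rr}; the expansion proceeds in integer (rather than half-integer) powers because $\dim T^*M^C$ is even, cf.\ \cite{Fds15}. Independence of the coefficients $\alpha_j$ for $j\le N-1$ then follows from Proposition \ref{fio1}(a): a change in the representation \eqref{2.5.1} or in the cutoff $\chi$ alters $\Phi_{h,\varepsilon,N}$ either by an $O(h^N)$-family or by one whose composition with $\op_h(a)$ is $O(h^\infty)$ (since $a$ is supported away from the zero section), and in either case the resulting contribution to the trace is of order $O(h^{N-\dim T^*M^C/2})$.

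I anticipate that the main obstacle will be the cleanness verification, i.e.\ writing out explicitly that the transverse Hessian of $\Psi$ along $T^*M^C$ is non-degenerate and identifying the induced density on $T^*M^C$ against which the leading symbol is integrated. This is a local calculation coupling the implicit-function description of $T^*M^C$ near a fixed point with the second-order behaviour of $\varphi$, and is precisely where the symplectic structure and the finite-order hypothesis must be combined. Once this is settled the extraction of the coefficients $\alpha_j$ by iterated stationary phase, and their stability under the choices made in the construction of $\Phi_{h,\varepsilon,N}$, is a standard, if somewhat tedious, exercise.
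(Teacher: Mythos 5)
Your route is the one the paper intends (the survey itself states the theorem without proof and refers to \cite{SS1}): write the trace as an oscillatory integral with total phase $\Psi=\langle x-y,\xi\rangle+\varphi(y,x,\theta)$, note that with the parametrization convention actually used in Example \ref{ex.3} the critical equations force $C(x,\xi)=(x,\xi)$, derive cleanness from $C^k=I$ (then $dC$ is semisimple at a fixed point, $\ker(dC-I)=T(T^*M^C)$ is a symplectic subspace, and $dC-I$ is invertible on its $\omega$-orthogonal complement $\mathrm{im}(dC-I)$), and the power count $h^{-3n/2-d/2}\cdot h^{(3n+d-\dim T^*M^C)/2}=h^{-\dim T^*M^C/2}$ together with the evenness of $\dim T^*M^C$ gives \eqref{eq-5rr}. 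That skeleton, including the trace-class remark, is correct.

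Two places, however, are genuinely thinner than the proof requires. First, the critical manifold is a noncompact cone, so the stationary phase theorem you cite (H\"ormander, Vol.\ I, Thm.\ 7.7.6: isolated nondegenerate critical point, compactly supported amplitude) does not apply as stated; you need the clean (fibered) version \emph{and} uniformity in the fiber direction, which is obtained from the joint degree-one homogeneity of $\Psi$ in $(\xi,\theta)$: pass to polar coordinates, treat the radius times $1/h$ as the large parameter on the compact spherical part, and use the hypothesis $\mathrm{ord}(a)<-2\dim M$ to make the radial integral, the integral over $T^*M^C$, and the summed remainders converge -- this is where the order assumption does real work beyond trace class, and your write-up does not address it. Second, your argument for the independence of $\alpha_0,\dots,\alpha_{N-1}$ does not reach $j=N-1$: Proposition \ref{fio1}(a) is an operator-norm statement, and the available bound $|\tr(\op_h(a)R_h)|\le\|\op_h(a)\|_{\tr}\,\|R_h\|=O(h^{N-\dim M})$ (since $\|\op_h(a)\|_{\tr}=O(h^{-\dim M})$) fixes the coefficients only modulo $h^{N-\dim M}$, i.e.\ for $j<N-\dim M+\dim T^*M^C/2$, which is weaker than $j\le N-1$ unless $T^*M^C=T^*M$. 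To get the asserted $O(h^{N-\dim T^*M^C/2})$ you must apply the same fixed-point localization to the discrepancy itself (it is again an oscillatory integral, with an extra factor $h^N$ in the amplitude), rather than quote Proposition \ref{fio1}(a); moreover that proposition concerns composition with $\op_h(a)$ on the right, whereas the trace has it on the left, so you also need cyclicity of the trace together with a factorization of $\op_h(a)$ into two factors of sufficiently negative order to convert $O(h^\infty)$ operator norms into trace norms. With these two points supplied (and the deferred cleanness computation carried out -- you have identified the correct mechanism for it), your plan coincides with the argument of \cite{SS1}.
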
 

\begin{definition}
Let $a=\{a_l\}_{l\in G}\in \A\rtimes_{\rm alg} G$ be of order $<-2\dim M$. 
Define the trace functional $\tau_{g,N}$ localized at $g\in G$ by 
\begin{equation}\label{eq-6}
   \tau_{g,N}(a)= \sum\limits_{l\in\langle g\rangle}  \tr(Op_h(a_{l})\Phi_{l,h,\varepsilon,N})\in 
   (h^{-\dim T^*M^g/2} \mathbb{C}[h])/h^{N-\dim M}, 
\end{equation}
where $(h^{-\dim T^*M^g/2} \mathbb{C}[h])/h^{N-\dim M}$ stands for the space of  Laurent polynomials 
$$
\sum_{-\dim T^*M^g/2\le j<N-\dim M}c_j h^j
$$
and $\varepsilon$ in the definition of the $\Phi_{l,h,\varepsilon,N}$  is chosen such  that the first $N$ components in the expansion of $a_{l}\in \mathbb{A}$
in powers of $h$ are equal to zero on the set 
$C_l\{|\xi|<2\varepsilon\}\subset T^*M$.
\end{definition} 

It turns out that the definition is independent of the choices involved in the definition of the $\Phi_{l,h,\gve,N}$. Moreover, $\tau_{g,N}$ is a trace.

\begin{definition}\label{algind1}
Given an  elliptic symbol $a\in \mathbb{B}$, its  algebraic index localized at the conjugacy
class $\langle g\rangle\subset G$  is defined as
\begin{eqnarray}\label{eq-algind4}
 \widetilde{\ind}_{g,N} a&=&\tau_{g}(1-r_N*a)-\tau_{g}(1-a*r_N)=\tau_{g}[a,r_N]\\
 &&\in \left(h^{-\dim T^*M^g/2} \mathbb{C}[h]\right)/h^{N-\dim M} ,\nonumber
\end{eqnarray}
where $r$ is an almost-inverse symbol for $a$ such that \eqref{eq-ai2} holds.
\end{definition}
The algebraic index \eqref{eq-algind4} is independent of the choice of the almost-inverse symbol $r_N$
and the algebraic indices for different $N$ are compatible:
$$
 \widetilde{\ind}_{g,N} a\equiv \widetilde{\ind}_{g,N+1}a \mod h^{N-\dim M}.
$$
They  define the algebraic index as $N\to \infty$
\begin{equation}\label{eq-algind4a}
 \widetilde{\ind}_{g} a\in h^{-\dim T^*M^g/2} \mathbb{C}[[h]].
\end{equation}

The main result of \cite{SS1} is:
\begin{theorem}\label{th-1}
Given a finite order element $g\in G$, the algebraic index localized at $g$ has no negative and no positive powers of $h$, and its constant term is equal to the analytic index:
\begin{equation}\label{eq-7}
  \ind_g \op_h(a)=(\widetilde{\ind}_g a)_{|h=0}.
\end{equation}
\end{theorem}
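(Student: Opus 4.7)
The plan is to interpolate between the algebraic side and the analytic side by means of the semiclassical quantization of Proposition \ref{fio1}, and then exploit integrality of the Fredholm index to collapse the Laurent polynomial to its constant term.

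First I would assemble the relevant operator family. Given the elliptic $a = \{a_l\}_{l\in G} \in \mathbb{B}$ and an almost-inverse $r_N \in \mathbb{B}$ as supplied by the preceding lemma, set
$$A_h := \sum_{l} \op_h(a_l)\,\Phi_{l,h,\varepsilon,N}, \qquad R_h := \sum_{l} \op_h(r_{N,l})\,\Phi_{l,h,\varepsilon,N}.$$
By the composition formula in Proposition \ref{fio1}(b) together with the semiclassical Egorov theorem (c), the products $A_hR_h$ and $R_hA_h$ differ from $I$ by operators whose semiclassical symbols lie in $\mathbb{B}_N$, modulo $O(h^N)$-families. In particular $A_h$ is Fredholm for every sufficiently small $h>0$, and by an extension of the Fedosov formula
$$\ind_g A_h = \Tr_g(I - R_hA_h) - \Tr_g(I - A_hR_h) = \Tr_g[A_h,R_h].$$

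Next I would identify the two indices term by term. Using Proposition \ref{fio1}(b)/(c), the semiclassical symbol of $[A_h,R_h]$ is the $*$-commutator $[a,r_N]$ in $\mathbb{A}\rtimes_{\mathrm{alg}} G$, modulo $O(h^N)$-families. Because $[a,r_N] \in \mathbb{B}_N$ with $N > 2\dim M$, the trace-asymptotics theorem for $\tr(\op_h(c)\,\Phi_{l,h,\varepsilon,N})$ applies componentwise and gives
$$\Tr_g[A_h, R_h] \;=\; \tau_{g,N}[a, r_N] \;+\; O\bigl(h^{N - \dim M}\bigr),$$
that is, $\ind_g A_h = \widetilde{\ind}_{g,N}\, a + O(h^{N - \dim M})$, where the right-hand side is a Laurent polynomial in $h$ of the form $\sum_{-\dim T^*M^g/2 \le j < N-\dim M} c_j\, h^j$.

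Now I would invoke rigidity. The left-hand side $\ind_g A_h$ is, by Proposition 19 of \cite{SS1}, a function only of the symbol class of $A_h$; since the family $\{A_h\}_{h>0}$ is an elliptic deformation of $A_{h_0}$ within the algebra, $\ind_g A_h$ is locally constant in $h$ and hence constant on the relevant interval. Since a constant can match a polynomial with remainder $O(h^{N-\dim M})$ only if every non-constant coefficient $c_j$ ($j \ne 0$) with $|j| < N - \dim M$ vanishes, the Laurent polynomial $\widetilde{\ind}_{g,N}\, a$ must reduce to its constant term as far down as the remainder permits. Because $N$ is arbitrary and the indices are compatible modulo $h^{N-\dim M}$, the full algebraic index $\widetilde{\ind}_g a \in h^{-\dim T^*M^g/2}\,\mathbb{C}[[h]]$ has no negative powers and no positive powers, and its constant term equals $\ind_g \op_h(a)$.

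The main obstacle I anticipate is Step 3, the rigidity argument: one must carefully check that the localized analytic index $\ind_g$, not just the full Fredholm index, is invariant under the semiclassical deformation $h \mapsto A_h$. The full Fredholm index is a sum of the localized pieces over conjugacy classes, but the individual localized contributions are not a priori integers, so their constancy in $h$ has to be argued from the definition $\Tr_g[A_h,R_h]$ rather than from an integrality principle. The proof should use that each $\Tr_g$ is independent of the choice of almost-inverse, combined with differentiability in $h$ of $A_h$ and $R_h$, to show $\partial_h \Tr_g[A_h,R_h] = 0$; this is where the semiclassical composition and Egorov calculus must be used most delicately. Once this $h$-independence is established, comparison with the Laurent polynomial structure of $\tau_{g,N}$ closes the argument.
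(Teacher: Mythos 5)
Your overall architecture -- quantize the symbol and its almost-inverse semiclassically, use Proposition \ref{fio1}(b),(c) to see that the defect terms lie in $\mathbb{B}_N$ modulo $O(h^N)$-families, evaluate the localized traces by the trace-asymptotics theorem, and then play off an $h$-independent analytic quantity against the Laurent polynomial $\widetilde{\ind}_{g,N}a$ -- is the route intended here (the present note does not prove the theorem; it refers to \cite{SS1}, and your scheme matches the ingredients listed). The genuine gap is the rigidity step, which is exactly where the weight of the proof lies and which you do not establish. The inference you offer is invalid: that $\ind_g$ is an invariant of the \emph{complete symbol} (Proposition 19 of \cite{SS1}) does not make $\ind_g A_h$ locally constant in $h$, because the complete symbol of $\op_h(a_l)$ itself varies with $h$, and since $\ind_g$ is a complex number rather than an integer, no integrality-plus-continuity argument is available. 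Your fallback, showing $\partial_h \Tr_g[A_h,R_h]=0$, runs into a concrete obstruction: differentiating yields $\Tr_g\bigl([\partial_h A_h,R_h]+[A_h,\partial_h R_h]\bigr)$, and the two commutators are \emph{not individually} in $\Psi^{-N}(M)\rtimes_{\rm alg}G$ (only their sum is), so the trace property of $\Tr_g$ cannot be applied termwise. Without a genuine proof of $h$-independence, the asymptotic comparison only constrains the behaviour as $h\to 0$ and neither the vanishing of the positive powers nor identity \eqref{eq-7} follows.

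A second, unaddressed point is that your $A_h$, $R_h$ are built from the truncated families $\Phi_{l,h,\varepsilon,N}$, whereas the left-hand side of \eqref{eq-7} is the localized analytic index of $\op_h(a)=\sum_l \op_h(a_l)\Phi_l$ with the exact unitary representation $\Phi_l$: both the functional $\Tr_g\bigl(\sum_l K_l\Phi_l\bigr)=\sum_{l\in\langle g\rangle}\tr(K_l\Phi_l)$ and the definition of $\ind_g$ are only available for the genuine $\Phi_l$, while the truncated families compose merely modulo $O(h^N)$, so ``$\ind_g A_h$'' is not defined by the paper's definitions. You therefore still need the comparison step: since the components of $a$ and $r_N$ vanish near the zero section, $\op_h(a_l)\Phi_l$ and $\op_h(a_l)\Phi_{l,h,\varepsilon,N}$ differ by families whose contribution to the relevant traces is $O(h^{N-\dim M})$, so that the parametrix identity and the trace asymptotics computed for the truncated operators really evaluate $\ind_g \op_h(a)$. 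Finally, when invoking the trace-asymptotics theorem you should record why each $C_l$, $l\in\langle g\rangle$, has finite order (it does, since $\Phi_l^k=\Phi_{l^k}=I$ when $g^k=e$), as that hypothesis is explicitly required there.
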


What if $g$ is not of finite order? The following proposition gives a partial answer:

\begin{proposition}\label{prop7}
Suppose that there exists a group homomorphism $\chi:G\to \mathbb{Z}$  such that $\chi(g_0)\ne 0.$
 Given an elliptic operator $D$ and $g_0\in G$, we then have
 $$
  \ind_{g_0} D=0.
 $$
\end{proposition}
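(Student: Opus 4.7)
The plan is to twist $D$ by a one-parameter family of automorphisms determined by $\chi$, and to combine the scaling behavior of the localized trace under this twist with the homotopy invariance of the Fredholm index to force $\ind_{g_0} D = 0$.

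For $t\in \R$, I define the algebra automorphism $\alpha_t$ of the operator algebra of Section~3 (modulo $\sK(L^2(M))$) by fixing the pseudodifferential factors and setting $\alpha_t(\Phi_g) = e^{it\chi(g)}\Phi_g$. Multiplicativity modulo compacts is immediate from $\chi$ being a homomorphism together with \eqref{e2.1}. Setting $D_t := \alpha_t(D)=\sum_g e^{it\chi(g)}D_g\Phi_g$, one checks that $D_t$ is elliptic and that $E_t := \alpha_t(E)$ is an almost-inverse, with $[D_t,E_t]=\alpha_t([D,E])$. Since $\chi$ is constant on conjugacy classes, every $l\in\langle g\rangle$ picks up the \emph{same} phase under $\alpha_t$, so $\Tr_g\circ\alpha_t = e^{it\chi(g)}\Tr_g$ on $\Psi^{-N}\rtimes_{\mathrm{alg}}G$, and hence
$$\ind_g D_t \;=\; e^{it\chi(g)}\,\ind_g D \qquad\text{for every conjugacy class }\langle g\rangle.$$

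Now $\{D_t\}_{t\in\R}$ is a continuous family of Fredholm operators, so by homotopy invariance of the Fredholm index and the summation formula $\ind D = \sum_{\langle g\rangle}\ind_g D$ stated earlier,
$$\sum_{\langle g\rangle} e^{it\chi(g)}\,\ind_g D \;=\; \ind D \qquad (\forall t\in\R).$$
Only finitely many conjugacy classes contribute to $[D,E]$, since $D$ and $E$ lie in the algebraic crossed product; hence the left-hand side is a finite trigonometric polynomial. Its constancy in $t$ forces all Fourier modes at nonzero frequencies to vanish, giving
$$\sum_{\langle g\rangle:\,\chi(g)=k}\ind_g D \;=\; 0 \qquad\text{for every }k\ne 0.$$

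If $\langle g_0\rangle$ is the only conjugacy class in the support of $[D,E]$ with $\chi$-value equal to $\chi(g_0)$, this yields $\ind_{g_0}D=0$ at once. In general, I would run the same deformation argument simultaneously with a separating family of homomorphisms $G\to\Z$; such a family is available in the virtually abelian setting of the paper, where $G^{\mathrm{ab}}_{\mathrm{free}}$ has rank equal to that of the $\Z^d$-factor of $G$, and the finitely many conjugacy classes appearing in $[D,E]$ can be distinguished by their images there. The resulting multi-parameter Fourier identity on a torus then forces the individual coefficient $\ind_{g_0}D$ to vanish. The main obstacle is precisely this separation step: a single character only yields one linear identity among the localized indices, and individual vanishing relies on promoting the nontriviality $\chi(g_0)\ne 0$ to a collection of characters in $\mathrm{Hom}(G,\Z)$ that distinguishes $\langle g_0\rangle$ from the other conjugacy classes contributing to $[D,E]$.
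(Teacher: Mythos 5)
Your deformation $D_t=\sum_g e^{it\chi(g)}D_g\Phi_g$ and the scaling identity $\ind_g D_t=e^{it\chi(g)}\ind_g D$ (valid because $\chi$ is constant on conjugacy classes and $\alpha_t$ is an automorphism taking the almost-inverse $E$ to an almost-inverse $E_t$) are exactly the right first half of the argument. The gap is in what you combine it with: you only invoke homotopy invariance of the \emph{total} Fredholm index together with $\ind D=\sum_{\langle g\rangle}\ind_g D$, which, as you note, yields merely the aggregated relations $\sum_{\langle g\rangle:\,\chi(g)=k}\ind_g D=0$ for $k\neq 0$ and cannot isolate $\ind_{g_0}D$. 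The missing ingredient is homotopy invariance of the \emph{localized} index itself: in \cite{SS1} (the same Proposition 19 from which parts (a) and (b) of the Lemma above are quoted) it is shown that $\ind_{g_0}$ is invariant under continuous deformations of elliptic operators of the form \eqref{e2.2}, essentially because $\Tr_{g_0}$ is a trace, so the $t$-derivative of $\Tr_{g_0}(1-E_tD_t)-\Tr_{g_0}(1-D_tE_t)$ is a sum of localized traces of commutators and vanishes. Applying this to your own family gives $\ind_{g_0}D_t=\ind_{g_0}D$ for all $t$, and combined with $\ind_{g_0}D_t=e^{it\chi(g_0)}\ind_{g_0}D$ and $\chi(g_0)\neq 0$ this forces $\ind_{g_0}D=0$ at once; this is the paper's proof.

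Your proposed repair of the gap does not work. First, the proposition is stated for an arbitrary discrete group $G$ equipped with a single homomorphism $\chi$ with $\chi(g_0)\neq 0$, so the proof may not assume that $G$ is a finite extension of $\Z^d$, nor that further characters exist. Second, even in that virtually abelian setting, homomorphisms to $\Z$ do not separate the relevant conjugacy classes: for $G=\Z^2\rtimes\Z/2$ with $\Z/2$ swapping the coordinates, every homomorphism $G\to\Z$ factors through $(m,n)\mapsto m+n$, so the distinct infinite-order conjugacy classes of $(1,2)$ and $(0,3)$ have identical images under all characters; for the infinite dihedral group $\Z\rtimes\Z/2$ one even has $\operatorname{Hom}(G,\Z)=0$. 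Hence the multi-parameter Fourier identity you envisage can never split the sum $\sum_{\chi(g)=k}\ind_g D$ into its individual terms in general, and the localized invariance argument is genuinely needed.
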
  

This condition is satisfied for all elements of infinite order, provided the group is a 
finite extension of $\Z^d$, $d\in \N_0$. Hence we obtain: 

\begin{corollary}
Suppose that $G$ is a finite extension of $\Z^d$. 
Given an elliptic symbol $a\in \B$, the Fredholm index of the corresponding operator  $A$
 is equal to the sum of localized algebraic indices over the torsion conjugacy classes in $G$:
\begin{equation}\label{eq83}
\ind A= \sum_{\langle g\rangle\subset {\rm Tor}\;G} (\widetilde{\ind}_g a)_{|h=0}.
\end{equation}
Here ${\rm Tor}\;G$ is the torsion subgroup of $G$.
\end{corollary}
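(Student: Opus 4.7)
The plan is to combine the additivity Lemma from the ``localized analytic index'' subsection with Theorem \ref{th-1} and Proposition \ref{prop7}. The additivity formula gives
\[
\ind A = \sum_{\skp g \subset G} \ind_g A,
\]
a finite sum since $a\in \B$ has only finitely many nonzero components $a_l$. I partition the conjugacy classes into torsion classes (those consisting of elements of finite order, which together form $\mathrm{Tor}\,G$) and non-torsion classes; this partition is well defined because the order of an element is a conjugacy invariant. The corollary thus reduces to showing that torsion classes contribute $(\widetilde{\ind}_g a)_{|h=0}$ and non-torsion classes contribute zero.

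For a torsion class $\skp g$, Theorem \ref{th-1} directly identifies $\ind_g A = (\widetilde{\ind}_g a)_{|h=0}$. For a non-torsion class $\skp{g_0}$, the plan is to produce a group homomorphism $\chi: G\to \Z$ with $\chi(g_0)\ne 0$ and then invoke Proposition \ref{prop7}. Here I use the hypothesis that $G$ is a finite extension of $\Z^d$, understood as the existence of a short exact sequence $1 \to F \to G \xrightarrow{q} \Z^d \to 1$ with $F$ finite. Since $F$ is finite and $g_0$ has infinite order, $q(g_0)\in\Z^d$ cannot vanish (otherwise $g_0\in F$ would itself be torsion). Choosing any coordinate projection $\pi: \Z^d \to \Z$ with $\pi(q(g_0))\ne 0$ and setting $\chi = \pi\circ q$ yields the desired character, so Proposition \ref{prop7} gives $\ind_{g_0} A = 0$, and the whole conjugacy class $\skp{g_0}$ contributes zero to the sum.

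Combining these two steps yields the formula \eqref{eq83}. The genuine analytic work, namely the identification of the localized analytic and algebraic indices, is encapsulated in Theorem \ref{th-1} and is treated here as a black box; given that result, the corollary is a matter of bookkeeping together with one elementary group-theoretic observation. The main potential pitfall is the interpretation of ``finite extension of $\Z^d$'': it must mean that $\Z^d$ is the quotient of $G$ by a finite normal subgroup, so that the quotient map $q$ separates elements of infinite order from the identity, as required to construct $\chi$ and apply Proposition \ref{prop7}.
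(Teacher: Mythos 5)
Your proposal is correct and is essentially the paper's own argument: the paper derives the corollary by combining the additivity of the localized analytic indices, Theorem \ref{th-1} for torsion classes, and Proposition \ref{prop7} together with the observation that a character $\chi:G\to\Z$ nonvanishing on any infinite-order element exists when $G$ is a finite extension of $\Z^d$. Your explicit construction of $\chi$ via the quotient map onto $\Z^d$ (and your reading of ``finite extension'' as $G/F\cong\Z^d$ with $F$ finite) is exactly the intended justification of that observation.
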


\newpage
\begin{flushleft}
Anton Savin\\
Peoples' Friendship University of Russia (RUDN University) \\
6 Miklukho-Maklaya St\\ Moscow, 117198\\ Russia \\
E-mail address: \texttt{antonsavin@mail.ru}
\end{flushleft}
\begin{flushleft}
Elmar Schrohe \\
Institute of Analysis\\ Leibniz Universität Hannover\\ Welfengarten 1\\ 
30167 Hannover\\ Germany\\
E-mail address: \texttt{schrohe@math.uni-hannover.de}
\end{flushleft}

\end{document}